\colorlet{pink}{red!40}
\colorlet{lightblue}{blue!30}
\colorlet{lightgreen}{green!30}
\renewcommand{\cite}[1]{\citep{#1}}
\def\appr{{\textnormal{\texttt{APPR}}}\xspace}
\def\cdappr{{\textnormal{\texttt{CDPR}}}\xspace}
\def\ista{{\textnormal{\texttt{ISTA}}}\xspace}
\def\fista{{\textnormal{\texttt{FISTA}}}\xspace}
\def\bb{{\mathbf{b}}}
\def\ss{{\mathbf{s}}}
\def\xx{{\mathbf{x}}}
\def\yy{{\mathbf{y}}}
\newcommand{\D}{\mathbb{D}}
\newcommand{\N}{\mathbb{N}}
\newcommand{\Q}{\mathbb{Q}}
\newcommand{\R}{\mathbb{R}}
\newcommand{\T}{\mathbb{T}}
\newcommand{\V}{\mathbb{V}}
\newcommand\cO{{\ensuremath{\mathcal{O}}}\xspace}
\DeclareMathOperator{\supp}{supp}
\DeclareMathOperator{\mathspan}{span}
\DeclareMathOperator{\vol}{vol}
\newcommand{\ones}{\mathbf{1}}
\newcommand{\zeroterm}{\ensuremath{\mathbb{0}}}
\newcommand{\oneterm}{\ensuremath{\mathbb{1}}}
\newcommand*{\vsepfbox}[1]{%
  \begingroup
    \sbox0{\fbox{#1}}%
    \setlength{\fboxrule}{0pt}%
    \mbox{\kern-\fboxsep\fbox{\unhbox0}\kern-\fboxsep}%
  \endgroup
}
\def\mathcolor#1#{\@mathcolor{#1}}
\def\@mathcolor#1#2#3{%
  \protect\leavevmode
  \begingroup
    \color#1{#2}#3%
  \endgroup
}
\definecolor{lightgrey}{rgb}{0.9,0.9,0.9}
\definecolor{mygray}{rgb}{0.6,0.6,0.6}
  \newcommand{\jmlrBlackBox}{\rule{1.5ex}{1.5ex}}
  \newcommand{\jmlrQED}{\hfill\jmlrBlackBox\par\bigskip}
\providecommand{\proofname}{Proof}
  \newenvironment{proof}%
  {%
   \par\noindent{\bfseries\upshape \proofname\ }%
  }%
  {\jmlrQED}
  \newcommand*{\theorembodyfont}[1]{%
    \renewcommand*{\@theorembodyfont}{#1}%
  }
  \newcommand*{\@theorembodyfont}{\normalfont\itshape}%
  \newcommand*{\theoremheaderfont}[1]{%
    \renewcommand*{\@theoremheaderfont}{#1}%
  }
  \newcommand*{\@theoremheaderfont}{\normalfont\bfseries }%
  \newcommand*{\theoremsep}[1]{%
    \renewcommand*{\@theoremsep}{#1}%
  }
  \newcommand*{\@theoremsep}{}%
  \newcommand*{\theorempostheader}[1]{%
    \renewcommand*{\@theorempostheader}{#1}%
  }
  \newcommand*{\@theorempostheader}{}%
  \let\jmlr@org@newtheorem\newtheorem
  \renewcommand*{\newtheorem}{\@ifstar\jmlr@snewtheorem\jmlr@newtheorem}
  \newcommand*{\jmlr@snewtheorem}[2]{%
    \cslet{jmlr@thm@#1@body@font}{\@theorembodyfont}%
    \cslet{jmlr@thm@#1@header@font}{\@theoremheaderfont}%
    \cslet{jmlr@thm@#1@sep}{\@theoremsep}%
    \cslet{jmlr@thm@#1@postheader}{\@theorempostheader}%
    \newenvironment{#1}%
    {%
      \trivlist
        \item
        [%
          \hskip\labelsep{\csuse{jmlr@thm@#1@header@font}#2%
            \csuse{jmlr@thm@#1@postheader}%
          }%
        ]%
        \mbox{}\csuse{jmlr@thm@#1@sep}%
        \csuse{jmlr@thm@#1@body@font}%
    }%
    {%
      \endtrivlist
    }%
  }
  \newcommand{\jmlr@newtheorem}[1]{%
    \cslet{jmlr@thm@#1@body@font}{\@theorembodyfont}%
    \cslet{jmlr@thm@#1@header@font}{\@theoremheaderfont}%
    \cslet{jmlr@thm@#1@sep}{\@theoremsep}%
    \cslet{jmlr@thm@#1@postheader}{\@theorempostheader}%
    \jmlr@org@newtheorem{#1}%
  }
  \renewcommand*{\@xthm}[2]{%
    \def\@jmlr@currentthm{#1}%
    \@begintheorem{#2}{\csname the#1\endcsname}%
    \ignorespaces
  }
  \def\@ythm#1#2[#3]{%
    \def\@jmlr@currentthm{#1}%
    \@opargbegintheorem{#2}{\csname the#1\endcsname}{#3}%
    \ignorespaces
  }
  \renewcommand*{\@begintheorem}[2]{%
    \ifdef{\@jmlr@currentthm}%
    {%
      \letcs{\jmlr@this@theoremheader}{jmlr@thm@\@jmlr@currentthm @header@font}%
      \letcs{\jmlr@this@theorembody}{jmlr@thm@\@jmlr@currentthm @body@font}%
      \letcs{\jmlr@this@theoremsep}{jmlr@thm@\@jmlr@currentthm @sep}%
      \letcs{\jmlr@this@theorempostheader}%
         {jmlr@thm@\@jmlr@currentthm @postheader}%
    }%
    {%
      \let\jmlr@this@theorembody\@theorembodyfont
      \let\jmlr@this@theoremheader\@theoremheaderfont
      \let\jmlr@this@theoremsep\@theoremsep
      \let\jmlr@this@theorempostheader\@theorempostheader
    }%
    \trivlist
      \item
       [%
        \hskip\labelsep{\jmlr@this@theoremheader #1\ #2%
           \jmlr@this@theorempostheader}%
       ]%
      \mbox{}\jmlr@this@theoremsep
      \jmlr@this@theorembody
  }
  \renewcommand*{\@opargbegintheorem}[3]{%
    \ifdef{\@jmlr@currentthm}%
    {%
      \letcs{\jmlr@this@theoremheader}{jmlr@thm@\@jmlr@currentthm @header@font}%
      \letcs{\jmlr@this@theorembody}{jmlr@thm@\@jmlr@currentthm @body@font}%
      \letcs{\jmlr@this@theoremsep}{jmlr@thm@\@jmlr@currentthm @sep}%
      \letcs{\jmlr@this@theorempostheader}%
         {jmlr@thm@\@jmlr@currentthm @postheader}%
    }%
    {%
      \let\jmlr@this@theorembody\@theorembodyfont
      \let\jmlr@this@theoremheader\@theoremheaderfont
      \let\jmlr@this@theoremsep\@theoremsep
      \let\jmlr@this@theorempostheader\@theorempostheader
    }%
    \trivlist
     \item[\hskip\labelsep{\jmlr@this@theoremheader #1\ #2\ (#3)%
       \jmlr@this@theorempostheader}]%
     \mbox{}\jmlr@this@theoremsep
     \jmlr@this@theorembody
  }
\newtheorem{theorem}{Theorem}
\newtheorem{lemma}[theorem]{Lemma}
\newtheorem{proposition}[theorem]{Proposition}
\newtheorem{remark}[theorem]{Remark}
\newtheorem{fact}[theorem]{Fact}
\crefname{enumi}{Property}{Properties} 
\let\epsilon\varepsilon
\renewcommand\paragraph{\@startsection{paragraph}{4}{\z@}%
                                    {0ex \@plus0.5ex \@minus.2ex}%
                                    {-1em}%
                                    {\normalfont\normalsize\bfseries}}
\newcommand\blfootnote[1]{
  \begingroup
  \renewcommand\thefootnote{}\footnote{#1}%
  \addtocounter{footnote}{-1}%
  \endgroup
}
\title[Accelerated and Sparse Algorithms for Approximate Personalized PageRank and Beyond]{Accelerated and Sparse Algorithms for\\ Approximate Personalized PageRank and Beyond}
\algnewcommand{\lst}{\texttt{lst}}
\algnewcommand{\slst}{\texttt{slst}}
\algnewcommand{\SEND}{\textbf{send}}
\newsavebox{\algleft}
\newsavebox{\algright}
\newcounter{algorithmicH}
\let\oldalgorithmic\algorithmic
\renewcommand{\algorithmic}{%
  \stepcounter{algorithmicH}
  \oldalgorithmic}
\renewcommand{\theHALG@line}{ALG@line.\thealgorithmicH.\arabic{ALG@line}}
\begin{document}

\maketitle

\blfootnote{Most of the notations in this work have a link to their definitions. For example, if you click or tap on any instance of $\xast$, you will jump to the place where it is defined as the minimizer of the function we consider in this work.}

\begin{abstract}
    It has recently been shown that \ISTA{}, an unaccelerated optimization method, presents sparse updates for the $\ell_1$-regularized personalized PageRank problem, leading to cheap iteration complexity 
    and providing the same guarantees as the approximate personalized PageRank algorithm (\appr{}) \citep{fountoulakis2019variational}. 
    In this work, we design an accelerated optimization algorithm for this problem that also performs sparse updates, providing an affirmative answer to the COLT 2022 open question of \citet{fountoulakis2022open}.
    Acceleration provides a reduced dependence on the condition number, 
    while the dependence on the sparsity in our updates differs from the \ISTA{} approach. 
    Further, we design another algorithm by using conjugate directions to achieve an exact solution while exploiting sparsity. Both algorithms lead to faster convergence for certain parameter regimes. Our findings apply beyond PageRank and work for any quadratic objective whose Hessian is a positive-definite $M$-matrix.
\end{abstract}

\section{Introduction}\label{sec:introduction}

\emph{Graph clustering}, the process of dividing a graph into subclusters that are internally similar or connected in some application-specific sense \citep{schaeffer2007graph}, has been widely applied in various domains, including technical \citep{virtanen2003clustering, andersen2006local}, biological \citep{xu2002clustering, bader2003automated, boyer2005syntons}, and sociological \citep{newman2003properties, traud2012social} settings. With the advent of large-scale networks, traditional approaches that require access to the entire graph have become infeasible \citep{jeub2015think, leskovec2009community, fortunato2016community}. This trend has led to the development of \emph{local graph clustering algorithms}, which only visit a small subset of vertices of the graph \citep{andersen2006local, andersen2008algorithm, mahoney2012local, spielman2013local, kloster2014heat, orecchia2014flow, veldt2016simple, wang2017capacity, yin2017local, fountoulakis2019variational}.

 At the heart of the study of these algorithms lies the \emph{approximate personalized PageRank algorithm} (\newtarget{def:acronym_approximate_personalized_page_rank}{\appr{}}) \citep{andersen2006local}, which approximates the solution of the PageRank linear system \citep{page1999page} and rounds the approximate solution to find local partitions in a graph. The \appr{} algorithm was introduced only from an algorithmic perspective, that is, its output is determined only algorithmically and not formulated as the solution to an optimization problem. Thus, quantifying the impact of heuristic modifications on the method is difficult, see, for example, \citep{gleich2014anti}.
Recently, \citet{fountoulakis2019variational} proposed a variational formulation of the local graph clustering problem as an $\ell_1$-regularized convex optimization problem, which they solved using the \emph{iterative shrinkage-thresholding algorithm} (\newtarget{def:acronym_ista}{\ista{}}) \citep{parikh2014proximal}. In this problem, \ista{} was shown to exhibit local behaviour, which leads to a running time that only depends on the nodes that are part of the solution and its neighbors, and is independent of the size of the graph. \citet{fountoulakis2022open} raised the open question of whether accelerated versions of the \ista{}-based approach or other acceleration techniques, for example, the \emph{fast iterative shrinkage-thresholding algorithm} (\newtarget{def:acronym_fista}{\fista{}}) \citep{parikh2014proximal}, or \emph{linear coupling} \citep{allenzhu2019nearly}, could lead to faster local graph clustering algorithms. 
In particular, \ista{} enjoys low per-iteration complexity since its iterates are at least as sparse as the solution, and the question is whether we can attain acceleration and reduce the dependence on the condition number on the computational complexity, while keeping sparse per-iteration updates. 

\paragraph{Sparse Algorithms and Acceleration.}
In this work, we answer the question in the affirmative. We first study the problem beyond acceleration and propose a method based on conjugate directions that optimizes exactly and is faster than \ista{} and our accelerated algorithm in some parameter regimes. Then, we show that we can implement an approximate version of the previous method by means of acceleration while performing sparse updates, which leads to faster convergence for ill-conditioned problems, among others. See \cref{table:comparisons:riemannian} for a summary of the complexities of our algorithms and of prior work, and see \cref{sec:algorithmic_comparisons} for a discussion comparing these complexities. 
Our algorithms sequentially determine the coordinates in the support of the solution. The main differences between the two approaches are that the conjugate-directions-based approach solves the problem in increasing subspaces exactly and requires to incorporate new coordinates one by one, while the accelerated algorithm solves this approximately and can add any number of new coordinates at a time.
Beyond the PageRank problem, our algorithms apply generally to the quadratic problem $\min_{x\in\Rp^{\n}}\{\g(\xx)\defi\innp{\xx, \Q\xx} - \innp{\bb, \xx}\}$, where $\Q$ is a symmetric positive-definite $M$-matrix.

\paragraph{Problem Structure.} The rates achieved with our two methods exploit improved geometric understanding of the $\ell_1$-regularized PageRank problem structure that we present. In particular, the $\ell_1$-regularized problem can be posed as a problem constrained to the positive orthant $\R^{\n}_{\geq 0}$. Based on this formulation, we characterize a region of points for which a negative gradient coordinate $i$ indicates $i$ is in the support $\suppast$ of the optimal solution $\xast$, provide sufficient conditions for finding points in this region with negative gradient coordinates, and show coordinatewise monotonicity of minimizers restricted to some relevant increasing subspaces, among other things.

\begin{table}[ht!]
    \centering
    \caption{Convergence rates of different algorithms exploiting sparsity for the $\ell_1$-regularized PageRank problem and other more general quadratic optimization problems with Hessian $\Q$, condition number $\L/\alpha$, $\suppast \defi \supp(\xast)$,  $\vol(\suppast) \defi \nnz(\Q_{:,\suppast})$ and $\intvol(\suppast) \defi \nnz(\Q_{\suppast,\suppast})$. } 
    \label{table:comparisons:riemannian} 
\begin{tabular}{llc} 
    \toprule
    \textbf{Method}   &  \textbf{Time complexity} & \textbf{Space complexity} \\
    \midrule
    \midrule
    \ista{} \citep{fountoulakis2019variational}                 & $\bigotilde{\volast\frac{\L}{\alpha}}$  & $\bigo{\sparsity}$ \\
    \midrule
     \cdappr{} (\cref{alg:sparse_conjugate_directions}) & $\bigo{\sparsity^3 + \sparsity \vol(\suppast)}$ & $\bigo{\sparsity^2}$\\
    \midrule
    \aspr{} (\cref{alg:sparse_acceleration})             & $\bigotilde{\sparsity\intvol(\suppast)\sqrt{\frac{\L}{\alpha}} + \sparsity \volast}$  & $\bigo{\sparsity}$\\
    \bottomrule
\end{tabular}
\end{table}

\subsection{Other Related Works}\label{sec:related_works}

Our solutions make use of first-order methods: accelerated projected gradient descent \citep{nesterov1998introductory} and the method of conjugate directions \citep{nocedal1999numerical}.
First-order optimization methods are attractive in the high-dimensional regime, due to their fast per-iteration complexity in comparison to higher order methods. In the strongly convex and smooth case, accelerated gradient descent is an optimal first-order method \citep{nesterov1998introductory} and it improves over gradient descent by reducing the dependence on the condition number. Because of this reason, accelerated gradient descent is especially useful for ill-conditioned problems. A method related to the conjugate directions method is the conjugate gradients algorithm  \citep{nocedal1999numerical}. Both of these conjugate methods can work in affine subspaces \citep{gower2014conjugate}, but to the best of our knowledge, it is not know how to provably use these algorithms with other kinds of constraints, see \citep{vollebregt2014bound} and references therein. For quadratic objectives, the conjugate gradient algorithm is also an accelerated method, and it belongs to the family of Krylov subspace methods, of which the generalized minimal residual method is an important example \citep{saad1986gmres}. In fact, the conjugate gradient algorithm was the inspiration for the first nearly-accelerated method for smooth convex optimization by \citet{nemirovski_bubeck}. Conjugate methods have been used to solve linear systems \citep{saad2003iterative} and although these methods are known to exploit the sparsity of the matrix, to the best of our knowledge there are no analyses of conjugate methods that exploit the sparsity of the solution.

For the $\ell_1$-regularized PageRank problem, \citet{hu2020local} demonstrated through numerical experiments  that the updates generated by \fista{} do not exhibit the same level of sparsity as those produced by \ista{} for this type of problem. 
To the best of our knowledge, no other works have studied the open question raised by \citet{fountoulakis2022open}.

\subsection{Preliminaries}\label{sec:preliminaries}

In this section, we introduce some definitions and notation to be used in the rest of this work.

Throughout, let $\n\in\N$. We use $[\n] = \{1, 2, \dots, \n\}$. We use the big-$\mathcal{O}$ notation $\newtarget{def:big_o_tilde}{\bigotilde{\cdot}}$ to omit logarithmic factors.
Let $\oneterm \in\R^{\n}$ denote the all-ones vector.
Denote the support of a vector $\xx\in\R^{\n}$ by $\supp( \xx )= \left\{i\in[\n] \mid x_i \neq 0 \right\}$ and define the projection of $\xx\in \R^{\n}$ onto a convex subset $C\subseteq \R^{\n}$ by $\newtarget{def:projection_operator}{\proj{C}}(\xx)=\argmin_{\yy\in C} \norm{\xx - \yy}_2$.
For $i\in[\n]$, we use $\newtarget{def:vector_of_canonical_basis}{\canonical[i]}\in\R^{\n}$ to denote the $i$-th unit vector and $\newtarget{def:simplex}{\simplex{\n}}$ to denote the $\n$-dimensional simplex. For $S\subseteq [\n]$, and a function    $f\colon \R^{\n} \to \R$, let $\nabla_S f(\xx)$ be the vector containing $(\nabla_i f(\xx))_{i\in S}$ sorted by index.  
Throughout, $\newtarget{def:symm_pos_def_M_matrix_Q}{\Q} \in \mathcal{M}_{\newtarget{def:dimension}{\n}\times \n}(\R)$ is always a positive-definite matrix with non-positive off-diagonal entries, that is, an $M$-matrix such that $\Q \succ 0$.

In this work, for one matrix $\Q$ of the form above and a vector $\bb\in\R^{\n}$, we study the optimization of a quadratic of the form $\newtarget{def:function_g_constrained_version_of_l1_reg_PageRank}{\g}(\xx) \defi \innp{\xx, \Q\xx} - \innp{\bb, \xx}$ constrained to the positive orthant $\Rp^{\n}$. Without loss of generality, we can thus assume that $\Q$ is symmetric. By strong convexity, the solution is unique. In the sequel, we focus on optimization algorithms for this problem whose iterates always have support contained in the support of the optimal solution $\newtarget{def:optimizer}{\xast}=\argmin_{\xx\in\R^n_{\geq 0}} \g(\xx)$. We define $\newtarget{def:support_of_the_solution}{\suppast} \defi \supp(\xast)$. We refer to coordinates $i \in [\n]$ as good if $i \in \suppast$, and as bad otherwise. We denote by $\newtarget{def:smoothness_constant}{\L}$ and $\newtarget{def:strong_convexity_of_g}{\alpha}$ upper and lower bounds on the eigenvalues of $\Q$, that is, smoothness and strong convexity constants of $\g$ defined as above, respectively. In short, we have $0 \prec \alpha \I  \preccurlyeq \nabla^2 \g(\xx) \preccurlyeq  \L \I$, for $\xx \in \R^{\n}$.

Throughout, $\newtarget{def:graph_G}{\G} = (\V, \edges)$ is a graph with vertex and edge sets $\newtarget{def:vertices_of_graph}{\V}$ and $\newtarget{def:edges_of_graph}{\edges}$, respectively.
We assume that $\card{\V} = \n$, that is, $\G$ consists of $\n$ vertices. Given two vertices $i, j\in [\n]$, $i\newtarget{def:neighbor_in_graph}{\neigh} j$ denotes that they are neighbours. For $S\subseteq \V$, $i\neigh S$ indicates that $i$ is the neighbour of at least one node in $S$.
As we describe in the next section, in PageRank problems, the matrix $\Q$ corresponds to a combination of the Lagrangian of a graph and the identity matrix $\newtarget{def:identity_matrix}{\I}$. For a subset of vertices $S\subseteq \V$, we formally define the volume of $S$ as $\newtarget{def:volume}{\vol}(S) = \sum_{i\in S} d_i + \card{S}$, that is, as the sum of the degrees of vertices in $S$, plus $\card{S}$, to account for the regularization, that presents a similar effect to lazyfying the walk given by the graph. Similarly, we formally define the internal volume of $S$ as $\newtarget{def:internal_volume}{\intvol}(S) \defi \card{S} + \sum_{(i,j)\in \edges} \ones_{\{i,j\in S\}} $, that is, as the sum of edges of the subgraph induced by $S$, plus $\card{S}$, to account for the regularization. 
This definition corresponds to $\vol(S) = \nnz(\Q_{:,\suppast})$ and $\intvol(S) = \nnz(\Q_{\suppast,\suppast})$, where $\newtarget{def:number_of_non_zeros}{\nnz}(\cdot)$ refers the number of non-zeros of a matrix, $\Q_{:,\suppast}$ refers to the columns of $\Q$ indexed by $\suppast$ and $\Q_{\suppast,\suppast}$ to the submatrix with entries $\Q_{i,j}$ for $i,j\in\suppast$. This is the formal definition of $\vol(\cdot)$ and $\intvol(\cdot)$ that we use when working with a general $M$-matrix $\Q$.  The complexity of our results depends on $\vol(\suppast)$ and $\intvol(\suppast)$.
\citet{fountoulakis2019variational} showed that for the $\ell_1$-regularized PageRank problem it is $\sum_{i\in \suppast} d_i \leq \frac{1}{\rho}$ and therefore $\vol(\suppast) \leq \frac{1}{\rho} + \sparsity$, where $\rho$ is the regularization parameter of the problem, see for example \eqref{eq:old_opt}.

\section[Personalized PageRank with l1-Regularization]{Personalized PageRank with $\ell_1$-Regularization}\label{sec:personalizedpagerank}

In this section, we introduce the PageRank problem that we study in this work, and we recall the variational formulation due to \citet{fountoulakis2019variational}. Let $\G = (\V, \edges)$ be a connected undirected graph with $\n$ vertices. We note that there are techniques to reduce an unconnected PageRank problem to a connected one, see for example \citet{eiron2004ranking}. Denote the adjacency matrix of $\G$ by $\newtarget{def:adjacency_matrix}{\A}$, that is, $\A_{i,j} = 1$ if $i\neigh j$ and $0$ otherwise. Let $\newtarget{def:diagonal_degree_matrix}{\D} \defi\operatorname{diag}(d_1, \dots, d_n)$ be the matrix with the degrees $\{d_i\}_{i=1}^{\n}$ in its diagonal. For $\alpha \in ]0, 1[$, consider the matrix
\begin{align}\label{eq:Q}
    \Q = \D^{-1/2} \left(\D - \frac{1-\alpha}{2} (\D + \A)\right)\D^{-1/2} = \alpha \I + \frac{1-\alpha}{2}\lapl \succ 0,
\end{align}
where $\newtarget{def:laplacian_matrix}{\lapl} \defi \I - \D^{-1/2}\A\D^{-1/2}$ is the symmetric normalized Laplacian matrix, which is known to be symmetric and satisfies $0 \prec \lapl \preccurlyeq  2 \I$ \citep{butler2006spectral}, hence the positive definiteness of $\Q$. In fact, by construction,
$0 \prec \alpha \I  \preccurlyeq \Q \preccurlyeq  \L \I$, for $\L = 1$. Note that $\Q_{i,j} \leq 0$ for $i\neq j$, so indeed $\Q$ is a positive definite $M$-matrix, which is what our algorithms require.

Next, given a distribution $\newtarget{def:personalized_distribution}{\ss} \in \simplex{\n}$ over the nodes of the graph $\G$,  called teleportation distribution, the personalized PageRank problem consists of optimizing the objective
$
    \newtarget{def:function_f_PageRank_objective}{\f}(\xx) \defi \frac{1}{2} \innp{ \xx , \Q \xx } - \alpha \innp{ \ss, \D^{-1/2} \xx }.
$
It holds that
$
    \nabla \f(\xx) = \Q \xx - \alpha \D^{-1/2}\ss,
$
$\nabla^2\f(\xx) = \Q$, and, thus, 
$\f$ is $\alpha$-strongly convex and $\L$-smooth. For $\newtarget{def:weight_in_l1_penalty}{\rho} > 0$, we are interested in the optimization of the $\ell_1$-regularized problem
\begin{align}\label{eq:old_opt}
    \min_{\xx \in \R^{\n}} \f(\xx) + \alpha\rho \norm{\D^{1/2} \xx}_1.
\end{align}
Solving \eqref{eq:old_opt} yields the same guarantees as \appr{}, see \citet{fountoulakis2019variational}. The advantage of the variational formulation \eqref{eq:old_opt} is that it allows to address the problem from an optimization perspective, as opposed to the algorithmic one of \appr{}, see \citet{andersen2006local}. 
Due to the strong convexity of the objective, \eqref{eq:old_opt} has a unique minimizer $\xast$. \citet{fountoulakis2019variational} proved that $\xast \geq \zeroterm$, which implies the following optimality conditions for \eqref{eq:old_opt} and $i\in [n]$:
\begin{align}\label{eq:old_optimality_conditions}
        \nabla_i \f(\xast) =  -\alpha\rho d_i^{1/2} \ \ \text{ if } \ \ \xast[i] > 0 \quad\quad\text{ and }\quad\quad \nabla_i \f(\xast)\in [-\alpha\rho d_i^{1/2}, 0]\ \ \text{ if } \ \ \xast[i] = 0.
\end{align}
Letting
\begin{align}
\label{eq:g}
    \g(\xx) \defi \f(\xx) + \alpha\rho \innp{ \oneterm,  \D^{1/2}\xx}  = \frac{1}{2}\innp{\xx, \Q\xx} + \alpha \innp{ \ss + \rho\oneterm, \D^{-1/2} \xx },
\end{align}
the optimality conditions for
$
     \argmin_{\xx\in\R^n_{\geq 0}}\g(\xx)
$
are equivalent to \eqref{eq:old_optimality_conditions}, that is, to the optimality conditions of Problem \eqref{eq:old_opt} and we have
\begin{align}\label{eq:opt_equivalent}
    \min_{\xx\in \R^{\n}}\f(\xx) + \alpha\rho \norm{\D^{1/2} \xx}_1
    = \min_{\xx\in\R^n_{\geq 0}}\g(\xx).
\end{align}
Put differently, at $\xast=\argmin_{\xx\in\R^n_{\geq 0}}\g(\xx)$, the following optimality conditions hold for $i\in[n]$:
\begin{align}\label{eq:new_optimality_conditions}
    \quad\quad \nabla_i \g (\xast) = 0\ \ \text{ if } \ \ \xast[i] > 0 \quad\quad \text{ and }\quad\quad \nabla_i \g (\xast) \in [0, \alpha\rho d_i^{1/2} ], \ \  \text{ if } \ \ \xast[i] = 0.
\end{align}
The algorithms presented in this work apply in particular to the minimization of $\g$ defined in \eqref{eq:g}.

\subsection[Projected Gradient Descent (PGD)]{Projected Gradient Descent (\pgd{})}\label{sec:pgd}

\begin{algorithm}
    \caption{Projected gradient descent (\pgd{})}\label{alg:pgd}
\begin{algorithmic}[1]
    \REQUIRE Closed and convex set $ C\subseteq \R^{\n}$, initial point $\xt[0] \in  C$, $f\colon  C \to \R$ an $\alpha$-strongly convex and $\L$-smooth function, and $\T\in \N$.
    \ENSURE $\xt[\T]\in  C$.
    \vspace{0.1cm}
    \hrule
    \vspace{0.1cm}
    \FOR {$t= 0, 1, \ldots, \T-1$}
       \State $\xt[t+1] \gets \proj{C}\left(\xt[t] - \frac{1}{\L}\nabla f(\xt[t])\right)$
    \ENDFOR
\end{algorithmic}
\end{algorithm}

\citet{fountoulakis2019variational} tackled Problem \eqref{eq:old_opt} by applying \ista{} to it, initialized at $\zeroterm$, and they showed that each iterate $\x[t]$ of the algorithm satisfies $\x[t] \geq \zeroterm$. Given $\x[t-1]$, the update rule of \ista{} defines the next iterate as
$
  \x[t] \defi \argmin_{\xx \in\R^{\n}} \rho\alpha\norm{\D^{1/2}\xx}_1 + \frac{1}{2}\norm{ \xx- (\x[t-1]-\nabla_i \f (\x[t-1]))}_2^2 = \argmin_{\xx\in\Rp^{\n}} \frac{1}{2}\norm{\xx- (\x[t]-\nabla \g (\x[t]))}^2,
$
where the equality follows directly by checking each coordinate, since the problems are separable. We note that the right hand side is the optimization problem that defines \pgd{} for $\g$ in $\Rp^{\n}$ . We present projected gradient descent (\newtarget{def:acronym_projected_gradient_descent}{\pgd{}}) in \cref{alg:pgd}, which will be useful to our analysis. None of our algorithms for addressing \eqref{eq:new_optimality_conditions} run \pgd{} as a subroutine.
The application of \pgd{} to the set $C\subseteq \R^{\n}$, initial point $\xt[0]\in  C$, objective $f\colon C \to \R$, and number of iterations $\T\in\N$ is denoted by $\xt[\T] = \pgd{}( C, \xt[0], f, \T)$.

\begin{fact}[Convergence rate of \pgd{}]\label{thm:pgd}
Let $ C\subseteq \R^{\n}$ be a closed convex set, $\xt[0] \in  C$, and $f\colon C\to\R$ an $\alpha$-strongly convex and $\L$-smooth function with minimizer at $\xast$. Then, for the iterates of \cref{alg:pgd}, it holds that
$
    \norm{\x[t] - \xxast}_2^2\leq \left(1 - \frac{1}{\kappa}\right)^t \norm{\x[0] - \xxast}_2^2,
$
where $\kappa\defi\frac{\L}{\alpha}$.  See \citet[Theorem~2.2.8]{nesterov1998introductory} for a proof. 
\end{fact}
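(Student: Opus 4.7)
The statement is the classical contraction bound for projected gradient descent on a strongly convex and smooth objective with step size $1/\L$, so the plan is the standard Nesterov-style argument: expand the squared distance to $\xast$, use non-expansiveness of $\proj{C}$, and combine smoothness (in the form of co-coercivity) with strong convexity to extract a $(1-1/\kappa)$ contraction factor per step. The main work is done at a single step; the $t$-step bound then follows by induction.

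\textbf{Step 1: reduce to a single-step inequality and use that $\xast$ is a fixed point of the projected-gradient map.} By the first-order optimality condition for a strongly convex objective on the closed convex set $C$, the minimizer $\xast$ satisfies $\xast = \proj{C}(\xast - \tfrac{1}{\L}\nabla f(\xast))$. Using this together with non-expansiveness of $\proj{C}$ in the $\ell_2$-norm gives
\begin{align*}
\norm{\x[t+1]-\xast}_2^2
&= \norml{\proj{C}\!\bigl(\x[t]-\tfrac{1}{\L}\nabla f(\x[t])\bigr)-\proj{C}\!\bigl(\xast-\tfrac{1}{\L}\nabla f(\xast)\bigr)}_2^2\\
&\leq \norml{\x[t]-\xast-\tfrac{1}{\L}\bigl(\nabla f(\x[t])-\nabla f(\xast)\bigr)}_2^2.
\end{align*}

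\textbf{Step 2: expand and apply co-coercivity and strong convexity.} Expanding the square yields three terms: $\norm{\x[t]-\xast}_2^2$, a cross term $-\tfrac{2}{\L}\innp{\nabla f(\x[t])-\nabla f(\xast),\,\x[t]-\xast}$, and $\tfrac{1}{\L^2}\norm{\nabla f(\x[t])-\nabla f(\xast)}_2^2$. By $\L$-smoothness and convexity of $f$, the gradient map is $(1/\L)$-co-coercive, i.e.\ $\norm{\nabla f(\x[t])-\nabla f(\xast)}_2^2\leq \L\,\innp{\nabla f(\x[t])-\nabla f(\xast),\,\x[t]-\xast}$, which cancels the third term against half of the second, leaving
\begin{align*}
\norm{\x[t+1]-\xast}_2^2 \leq \norm{\x[t]-\xast}_2^2 - \tfrac{1}{\L}\innp{\nabla f(\x[t])-\nabla f(\xast),\,\x[t]-\xast}.
\end{align*}
Now $\alpha$-strong convexity of $f$ gives $\innp{\nabla f(\x[t])-\nabla f(\xast),\,\x[t]-\xast}\geq \alpha\,\norm{\x[t]-\xast}_2^2$, so the right-hand side is at most $(1-\alpha/\L)\norm{\x[t]-\xast}_2^2 = (1-1/\kappa)\norm{\x[t]-\xast}_2^2$.

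\textbf{Step 3: iterate.} Applying the one-step contraction $t$ times gives the claimed bound $\norm{\x[t]-\xast}_2^2\leq (1-1/\kappa)^t\norm{\x[0]-\xast}_2^2$. There is no real obstacle here; the only subtlety worth stating carefully is that one must use both smoothness and strong convexity, and in particular the co-coercivity step is what makes the step size $1/\L$ work without losing any constant factor in the exponent. Everything is over a general closed convex $C\subseteq\R^{\n}$, so no structure of $\Rp^{\n}$ is used.
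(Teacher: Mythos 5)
Your proof is correct and is essentially the standard argument that the paper itself does not reproduce but delegates to \citet[Theorem~2.2.8]{nesterov1998introductory}: the fixed-point property $\xast=\proj{C}(\xast-\tfrac{1}{\L}\nabla f(\xast))$, non-expansiveness of the projection, and then a one-step contraction with factor $1-\tfrac{1}{\kappa}$. The only cosmetic difference is that Nesterov merges smoothness and strong convexity into a single inequality on $\innp{\nabla f(\xx)-\nabla f(\yy),\xx-\yy}$ (which even yields a slightly better constant), whereas you apply co-coercivity and strong convexity separately; both give the stated $(1-\alpha/\L)^t$ bound.
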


\subsection{Geometrical Understanding of the Problem Setting}\label{sec:geometry}

\citet{fountoulakis2019variational} proved that for their method, the iterates $\x[t]$ never decrease coordinate-wise, and they concluded $\xast \in \Rp^{\n}$ as a consequence of this fact and the convergence guarantees of \ista{}: $\zeroterm \leq \x[1] \leq \dots \leq \x[t] \leq \x[t+1]\to \xast$. We generalize this result proven for the iterates of \ista{} to several geometric statements on the problem. This result holds in a more general setting, namely a quadratic with a positive-definite $M$-matrix as Hessian. The proof illustrates the geometry of the problem and we include it below. For any point $\xx$ such that $x_i = 0$ if $i\not\in\suppast$ and $\nabla_{i} \g(\xx) \leq 0$ if $i\in\suppast$, we have that $\xx \leq \xast$, among other things.

\begin{proposition}\label{proposition:pgd_helper}
    Let $\g$ be as in \eqref{eq:g} and let $S \subseteq [\n]$ be a set of indices such that we have a point $\xt[0]\in\Rp^{\n}$ with $\xt[0][i] = 0$ if $i\not\in S$ and $\nabla_i \g(\xt[0])\leq 0$ if $i\in S$. Let $C\defi\spann{\{\canonical[i] \mid i \in S \}} \cap \Rp^{\n}$, $\xtast[C] \defi \argmin_{\xx\in C} \g(\xx)$ and $\xast \defi \argmin_{\xx\in\Rp^{\n}} \g(\xx)$. Then:
\begin{enumerate}
    \item \label{property:pgd_helper_monotone} It holds that $\xt[0]\leq \xtast[C]$ and $\nabla_i \g(\xtast[C]) = 0$ for all $i \in S$.
    \item \label{property:pgd_helper_positivity}  If for $i\in S$, we have $x^{(0)}_i > 0$ or $\nabla_i \g(\xt[0]) < 0$, then $\xtast[C][i] > 0$.
    \item \label{property:pgd_helper_subset} If $\xtast[C][i] > 0$ for all $i \in S$, we have $\xtast[C] \leq \xast$ and therefore $S \subseteq \suppast$.  
\end{enumerate}
\end{proposition}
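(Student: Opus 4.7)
The plan is to prove all three properties by running projected gradient descent (\pgd{}) on $\g$ and tracking the signs of the gradient coordinates, exploiting the $M$-matrix structure of $\Q$ (off-diagonal entries nonpositive) together with the smoothness bound $\Q_{ii}\leq \L$.

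For Property 1, I would run \pgd{} (\cref{alg:pgd}) on $C$ starting from $\xt[0]$ and prove by induction on $t$ that
\begin{enumerate}
    \item $\xt[t+1]\geq \xt[t]$ coordinatewise, and
    \item $\nabla_i \g(\xt[t])\leq 0$ for every $i\in S$.
\end{enumerate}
The base case is the hypothesis on $\xt[0]$. For the step, observe that when $\nabla_i \g(\xt[t])\leq 0$ and $\xt[t][i]\geq 0$ for $i\in S$, the projection onto $C$ is trivial at that coordinate, so $\xt[t+1][i] = \xt[t][i] - \frac{1}{\L}\nabla_i \g(\xt[t])\geq \xt[t][i]$, and for $i\notin S$ both iterates are $0$, giving (1). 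To propagate (2), let $\dd := \xt[t+1] - \xt[t]\geq 0$, which is supported on $S$ and satisfies $d_i = -\frac{1}{\L}\nabla_i \g(\xt[t])$ for $i\in S$. Using $\nabla \g(\xt[t+1]) - \nabla \g(\xt[t]) = \Q\dd$, I obtain for every $i\in S$
\[
\nabla_i \g(\xt[t+1]) = (\Q_{ii} - \L)d_i + \sum_{j\in S,\, j\neq i}\Q_{ij}d_j \leq 0,
\]
since $\Q_{ii}\leq \L$, $\Q_{ij}\leq 0$ for $j\neq i$, and $d_j\geq 0$. This is the key $M$-matrix computation and the main step that needs care. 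Passing to the limit and invoking \cref{thm:pgd} yields $\xt[t]\to \xtast[C]$, hence $\xt[0]\leq \xtast[C]$ and $\nabla_i\g(\xtast[C])\leq 0$ for $i\in S$. Combining this with the KKT conditions for $\xtast[C]$ in $C$ (which require $\nabla_i\g(\xtast[C])\geq 0$ whenever $\xtast[C][i]=0$ and $=0$ whenever $\xtast[C][i]>0$) forces $\nabla_i\g(\xtast[C])=0$ for all $i\in S$.

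Property 2 then follows directly from the monotonicity. If $\xt[0][i]>0$, then $\xtast[C][i]\geq \xt[0][i]>0$. If instead $\nabla_i\g(\xt[0])<0$, the very first \pgd{} iterate satisfies $\xt[1][i] = \xt[0][i] - \frac{1}{\L}\nabla_i\g(\xt[0])>0$, and again $\xtast[C][i]\geq \xt[1][i]>0$ by the monotonicity proved for Property 1.

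For Property 3, I would reapply the same monotonicity strategy, but now run \pgd{} on the full orthant $\Rp^{\n}$ starting from $\xt[0] := \xtast[C]$. The invariant to maintain inductively is: for every coordinate $i$ with $\xt[t][i]>0$, $\nabla_i\g(\xt[t])\leq 0$. At $t=0$ this uses that $\xtast[C]$ is supported on $S$ (so $\xtast[C][i]>0$ implies $i\in S$) together with $\nabla_i\g(\xtast[C])=0$ for $i\in S$ from Property 1, under the hypothesis that $\xtast[C]$ is strictly positive on $S$. The inductive step is exactly the same $M$-matrix computation as above, now carried out for arbitrary $i\in[\n]$. Since \pgd{} on $\Rp^{\n}$ converges to $\xast$ (\cref{thm:pgd}) and the iterates are coordinatewise nondecreasing, we conclude $\xtast[C]\leq \xast$. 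Finally, since $\xtast[C][i]>0$ for every $i\in S$, we get $\xast[i]\geq \xtast[C][i]>0$, so $S\subseteq \suppast$.
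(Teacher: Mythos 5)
Your proofs of Properties \ref{property:pgd_helper_monotone} and \ref{property:pgd_helper_positivity} are essentially the paper's: the same induction along \pgd{} iterates on $C$, with the same $M$-matrix step (you write it coordinatewise as $(\Q_{ii}-\L)d_i+\sum_{j\neq i}\Q_{ij}d_j\leq 0$, the paper writes it as nonnegativity of the entries of $\I-\frac{1}{\L}\Q_{S,S}$), followed by the limit and the KKT conditions. For Property \ref{property:pgd_helper_subset} you take a genuinely different route. The paper never leaves the world of exact subspace minimizers: it repeatedly invokes Property \ref{property:pgd_helper_monotone} to build a finite nondecreasing chain of minimizers over growing coordinate cones, adding at each stage the zero coordinates with negative gradient, and observes that the last element of the chain satisfies the global optimality conditions on $\Rp^{\n}$ and hence equals $\xast$. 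You instead run \pgd{} on the full orthant starting from $\xtast[C]$ and propagate the invariant ``every coordinate with positive value has nonpositive gradient,'' concluding $\xtast[C]\leq\xast$ from monotonicity of the trajectory and \cref{thm:pgd}. Your argument is correct; the one point to make explicit in the inductive step is that the invariant must also be verified for coordinates that become positive at step $t+1$, i.e.\ with $\xt[t][i]=0<\xt[t+1][i]$; this is fine because such a coordinate necessarily has $\nabla_i\g(\xt[t])<0$ and is not clamped, so the same computation applies, whereas for clamped coordinates ($d_i=0$) no claim is needed since they remain at zero. Comparing the two: your approach is more uniform (one \pgd{}-plus-$M$-matrix mechanism serves all three parts) and shows directly that the constrained minimizer is dominated by $\xast$ without the strict-positivity hypothesis being needed until the final conclusion $S\subseteq\suppast$; the paper's chain construction uses Property \ref{property:pgd_helper_monotone} purely as a black box on nested subspaces, avoids reasoning about newly activated coordinates inside a \pgd{} trajectory, and produces exactly the picture of increasing subspace minimizers that the algorithms of \cref{sec:intuition} later exploit.
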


\begin{proof}
First, by definition of $C$, for all $\xx \in C$, we have $x_i = 0$ if $i\not\in S$. 
    Let $\{\xt[t]\}_{t=0}^\infty$ be the sequence of iterates created by \pgd{}$(C,\xt[0], \g, \cdot)$ when the algorithm is run for infinitely many iterations. We first prove that for all $t \geq 0$ and for all $i \in S$, we have $\nabla_i \g(\xt[t])\leq 0$. It holds for $t=0$ by assumption. If we assume it holds for some $t\geq 0$, then we have 
    \begin{equation}\label{eq:aux:pgd_update_computation}
    x^{(t+1)}_i =   x^{(t)}_i - \frac{1}{\L}\nabla_i \g (\xt[t]) \geq x^{(t)}_i
    \end{equation}
    for all $i \in S$, that is, the points do not decrease coordinatewise. Let the function $\newtarget{def:function_g_restricted_to_subspace}{\gbar}$ be $\g$ restricted to $\spann{\{\canonical[i] \mid i \in S\}}$ and note $\nabla_i \g(\xx) = \nabla_i \gbar(\xx)$ for $i \in S$. The function $\gbar$ is a quadratic with Hessian $\Q_{S,S}$, that is, it is formed by $\Q_{i,j}$ for  $i,j \in S$. Quadratics have affine gradients and so we have by \eqref{eq:aux:pgd_update_computation} that $\nabla \gbar(\xt[t+1]) = \nabla \gbar(\xt[t]) - \frac{1}{\L}\Q_{S,S}\nabla \gbar(\xt[t]) \leq 0$, where the last inequality is due to the assumption $\nabla \gbar(\xt[t]) \leq 0$, and $(\I-\frac{1}{\L}\Q_{S,S})_{i,j} \geq 0$ for all $i, j \in S$. The latter holds because for $i, j \in S$, $i\neq j$, we have $\Q_{i,j} \leq 0$ and due to smoothness, it is $\Q_{i, i} = e_i^\transp \Q e_i \leq \L$.
    Thus, by induction, for all $t\in \N$ and $i \in S$, we have $\nabla_ig(\xt[t])\leq 0$. This has two consequences. Firstly, $\xt[0]\leq \xt[1] \leq \dots $, and so $\xt[0] \leq \xtast[C]$ since the iterates of \pgd{} converge to $\xtast[C]$, by \cref{thm:pgd}. Secondly, using the limit and continuity of $\nabla \g(\cdot)$, it is $\nabla_i \g(\xtast[C]) \leq 0$ for $i\in S$. This fact and the optimality of $\xtast[C]$ imply $\nabla_ig(\xtast[C]) = 0$ for all $i\in S$, proving the first statement.

     For the second stament, fix $i \in S$. Note that by the assumption and the update rule $x^{(t+1)}_i =   x^{(t)}_i - \frac{1}{\L}\nabla_i \g (\xt[t])$, it holds that $x^{(1)}_i > 0 $, and thus, since $\xtast[C][i] \geq \xt[1][i]$, we have $\xtast[C][i] > 0$.

     For the third statement, we sequentially apply the first one to obtain optimizers in increasing subspaces, until we reach $\xast$, while showing they do not decrease coordinatewise. Suppose that  $\xtast[C][i] > 0$ for all $i\in S$. If $\xtast[C]= \xast$, the statement holds. Thus, we assume that $\xtast[C]\neq \xast$. 
     In that case, for $k \in \N$, define the optimizer $\yy^{(*, k)} \defi \argmin_{\yy\in B^{(k-1)}}\g(\yy)$ with respect to the set $B^{(k-1)} \defi \spann{\{\canonical[i] \mid i \in R^{(k-1)}\}} \cap \R^n_{\geq 0}$, where $R^{(k-1)} \defi R^{(k-2)} \cup N^{(k-1)}$ for $k>0$ and $R^{(-1)} \defi S$, and where $N^{(k-1)}\defi \{i\in[\n] \mid y^{(*,k-1)}_i = 0, \nabla_i \g(\yy^{(*,k-1)}) < 0\}$.
    By \cref{property:pgd_helper_monotone}, it holds that $\xtast[C] = \yy^{(*,0)} \leq \ldots \leq \yy^{(*,k)}$ and $\nabla_ig(\yy^{(*,k)}) = 0$ for all $i\in R^{(k-1)}$ and $k\in\N$. 
    Let $K \in \N$ denote the first iteration for which $R^{(K)} = R^{(K-1)}$, or, equivalently $N^{(K)} = \emptyset$. The existence of such a $K$ is guaranteed because otherwise $R^{(k)}\subset R^{(k+1)}$ for all $k\in \N$, but necessarily it is $|R^{(k)}| \leq \n$. 
    Thus, $\nabla_ig(\yy^{(*,K)}) = 0$ for all $i\in R^{(K-1)}$ and $\nabla_ig(\yy^{(*,K)}) \geq 0$ for all $i\not\in R^{(K-1)}$. In summary, $\yy^{(*, K)} $ satisfies the optimality conditions of the problem $\min_{\xx\in\Rp^{\n}} \g(\xx)$, implying that $\yy^{(*,K)} = \xast$. Since $S= R^{(-1)} \subseteq R^{(K)}$, \cref{property:pgd_helper_subset} holds.
\end{proof}

\subsection{Algorithmic Intuition}\label{sec:intuition}

In this section, we present the high-level idea of our algorithms for addressing \eqref{eq:opt_equivalent}. 
The core idea behind them is to start with the set of known good indices $ \Sinit=\emptyset$ and iteratively expand it, $\Sinit \subsetneq \St[0] \subsetneq \ldots \subsetneq \St[\T]$, until we have $\St[\T]=\suppast$ or we find an $\newtarget{def:accuracy_epsilon}{\epsilon}$-minimizer of \eqref{eq:opt_equivalent}. 
For $t\in\{0,1,\ldots, \T\}$, to determine elements $i\in \suppast\setminus \St[t-1]$, we let
\begin{align}\label{eq:algs_solve_this}
    \newtarget{def:optimizer_in_subspace}{\xtast[t]} =\argmin_{\xx\in \Ct[t-1]} \g(\xx),
\end{align}
where $\Ct[t-1] \defi \spann{\{\canonical[i] \mid i \in \St[t-1]\}} \cap \R^n_{\geq 0}$. By an argument following \cref{proposition:pgd_helper} that we will detail later, $\nabla_ig(\xtast[t]) < 0$ for at least one $i\in \suppast\setminus \St[t-1]$ and $\nabla_jg(\xtast[t]) \geq 0$ for all $j \not\in \suppast$. This observation motivates the following procedure: At iteration $t\in\{0,1,\ldots, \T-1\}$, construct $\xtast[t]$, check if $N^{(t)} = \{i\in[\n]\mid \nabla_i \g(\xtast[t]) < 0\}$ is not empty, and, in such a case, set $\St[t] \subseteq \St[t-1] \cup N^{(t)}$ and repeat the procedure. Should it ever happen that $N^{(t)}=\emptyset$, then we have $\xtast[t] = \xast$, that is, we found the optimal solution to \eqref{eq:opt_equivalent} and the algorithm can be terminated.
When using conjugate directions as the optimization algorithm for constructing \eqref{eq:algs_solve_this}, and when only incorporating good coordinates one by one, we obtain \cref{alg:sparse_conjugate_directions} (\newtarget{def:acronym_conjugate_directions_pagerank}{\cdappr{}}), see \cref{sec:cdappr}. For our second algorithm, \cref{alg:sparse_acceleration} (\aspr{}), we use accelerated projected gradient descent to construct only an approximation of \eqref{eq:algs_solve_this} and we show that this method still allows us to proceed. We discuss the subtleties arising from using an approximation algorithm in \cref{sec:apgdappr}.

\section{Conjugate Directions for PageRank}\label{sec:cdappr}
{ 
\renewcommand\T{\newlink{def:final_iteration_T_of_conjugate_directions}{T}}

\let\oldxtast\xtast
\let\oldxt\xt
\let\oldSt\St
\let\oldCt\Ct
\let\oldSinit\Sinit

\let\xtast\undefined
\let\xt\undefined
\let\St\undefined
\let\Ct\undefined
\let\Sinit\undefined

\NewDocumentCommand{\xtast}{oo}{
    \newlink{def:optimizer_in_subspace_for_CDPR}{
        \IfNoValueTF{#1}
            {\xx^{(\ast, t)}}
        {
        \IfNoValueTF{#2}
            {\xx^{(\ast, #1)}}
            {x^{(\ast, #1)}_{#2}}
        }
    }
}

\NewDocumentCommand{\xt}{oo}{
    \newlink{def:iterate_xt_of_CDPR}{
        \IfNoValueTF{#1}{
            {\xx^{(t)}}
        }
        {
        \IfNoValueTF{#2}
            {\xx^{(#1)}}
            {x^{(#1)}_{#2}}
        }
    }
}

\newcommand\Sinit{\newlink{def:initial_set_of_known_good_coordinates_CDPR}{S^{( -1 )}}}
\newcommandx*\St[1][1=t, usedefault]{\newlink{def:set_of_known_good_coordinates_CDPR}{S^{( #1 )}}}
\newcommandx*\Ct[1][1=t, usedefault]{\newlink{def:span_of_known_good_coordinates_in_Rp_CDPR}{C^{( #1 )}}}

\begin{algorithm}
    \caption{Conjugate directions PageRank algorithm (\cdappr{})} \label{alg:sparse_conjugate_directions}
\begin{algorithmic}[1]
    \REQUIRE Quadratic function $g:\R^{\n}\to \R$ with Hessian $\Q \succ 0$ being a symmetric $M$-matrix. The $\ell_1$-regularized PageRank problem corresponds to choosing $\g$ as in \eqref{eq:g}.
    \ENSURE $\xt[\T] =\argmin_{\xx\in\Rp^{\n}} \g(\xx)$, where $\newtarget{def:final_iteration_T_of_conjugate_directions}{\T} \in \N$ is the first iteration for which $N^{(\T)} = \emptyset$. 
    \vspace{0.1cm}
    \hrule
    \vspace{0.1cm}
    \State $t\gets 0$
    \State $\xt[t]\gets \zeroterm$
    \State $N^{(t)} \gets \left\{i\in[\n] \mid \nabla_i \g(\xt[t]) < 0\right\}$
    \WHILE{$N^{(t)} \neq \emptyset$}
      \State $i^{(t)} \in N^{(t)}$
      \State $\newtarget{def:initial_basis_ut_in_algorithm}{\ut[t]} \gets \nabla_{i^{(t)}}\g(\xt[t])\cdot \canonical[i^{(t)}]$
      \State  $\beta_k^{(t)} \gets \nabla_{i^{(t)}}\g(\xt[t])\innp{\Q_{i^{(t)},:}, \dtbar[k]} $ for all $k = 0, \dots, t-1$ \Comment{equal to $-\frac{\innp{\ut[t],\Q\dt[k]}}{\innp{ \dt[k], \Q\dt[k]}}$}
      \State $\newtarget{def:directions_dt_in_conjugate_directions}{\dt[t]} \gets \ut[t] + \sum_{k=0}^{t-1} \beta_k^{(t)} \dt[k]$\Comment{store this sparse vector}
      \State $\newtarget{def:stored_normalized_directions_dt_in_conjugate_directions}{\dtbar[t]} \gets \frac{ \dt[t]}{\innp{ \dt[t], \Q \dt[t]}} $ \Comment{store $\innp{ \dt[t], \Q \dt[t]}$}
      \State $\eta^{(t)}\gets -\innp{\nabla \g(\xt[t]), \dtbar[t]}$ 
      \State $\newtarget{def:iterate_xt_of_CDPR}{\xt[t+1]}\gets \xt[t] + \eta^{(t)}\dt[t]$
      \State $N^{(t+1)} \gets \left\{i\in[\n] \mid \nabla_i \g(\xt[t+1]) < 0\right\}$
      \State $t\gets t +1$
  \ENDWHILE
\end{algorithmic}
\end{algorithm}

With the geometric properties of the problem we established in \cref{sec:personalizedpagerank}, we are ready to introduce the conjugate directions PageRank algorithm (\cdappr{}) \cref{alg:sparse_conjugate_directions}, a \emph{conjugate-directions}-based approach for addressing \eqref{eq:opt_equivalent}, which outperforms the \ista{}-solver due to \citet{fountoulakis2019variational} in certain parameter regimes. \cdappr{} is based on the algorithmic blueprint outlined in \cref{sec:intuition} and constructs $\xtast[\T]$ as in \eqref{eq:algs_solve_this} using conjugate directions. As we will prove formally, it is $\zeroterm \leq \xtast[t]$ for all $t\in\{0,1,\ldots, \T\}$, allowing us to solve the constrained problem \eqref{eq:algs_solve_this} by dropping the non-negativity constraints and using the method of conjugate directions. This is an important point, since this method is designed for affine spaces only and, to the best of our knowledge, cannot deal with other constraints. Conjugate directions are an attractive mechanism for finding \eqref{eq:algs_solve_this}, as it allows to exploit the sparsity of the solution, is exact, and does not rely on the strong convexity of the objective, leading to a time complexity independent of $\alpha$. Note that, even though we may learn about several new good coordinates at the end of an iteration, in order to maintain the invariants required for our \cdappr{}, we can add at most one new coordinate to $\St[t]$ at a time. This algorithm requires more memory than the \ista{}-solver of \citet{fountoulakis2019variational} and \aspr{}, which is due to storing an increasing $\Q$-orthogonal basis that is required to perform exact optimization over $\Ct[t]$ by performing Gram-Schmidt with respect to $\Q$.

\cref{alg:sparse_conjugate_directions} works in the following way. Initialize with $\newtarget{def:initial_set_of_known_good_coordinates_CDPR}{\Sinit} \defi \emptyset$, and $\xtast[0]\defi \zeroterm$. For $t \in \{0, 1,\ldots, \T\}$, let the set of known good coordinates be $\newtarget{def:set_of_known_good_coordinates_CDPR}{\St[t]}\defi \St[t-1] \cup \{i^{(t)}\}$, and define $\newtarget{def:span_of_known_good_coordinates_in_Rp_CDPR}{\Ct[t]} \defi \spann{\{\canonical[i] \mid i \in \St[t] \}} \cap \R^n_{\geq 0}$, and 
$\newtarget{def:optimizer_in_subspace_for_CDPR}{\xtast[t]} \defi\argmin_{\xx\in C^{(t-1)}} \g(\xx)$. At each iteration $t \in \{0, 1, \ldots, \T-1\}$, we start at $\xtast[t] \geq 0$, for which it holds that $\nabla_i \g(\xtast[t]) = 0$ for $i\in\St[t-1]$ and there exists at least one $i^{(t)} \not \in \St[t-1]$ such that $\nabla_{i^{(t)}} \g(\xtast[t]) < 0$ unless we are already at the optimal solution, that is, $\xtast[t-1] = \xast$. We arbitrarily select one such index, and then perform Gram-Schmidt with respect to $\Q$ in order to obtain $\dt[t]$ that is $\Q$-orthogonal to all $\dt[k]$ for $k < t$. Next,  one can see that the optimizer $\xt[t+1]$ along the line $\xt[t] +\eta^{(t)} \dt[t]$ results in the optimizer for the subspace $\spann{\{\canonical[i] \mid i \in \St[t] \}}$, which is $\xtast[t+1] \geq 0$.
After $\sparsity$ iterations, we obtain $\xast$.  We formalize and prove the claims of the overview below.

\begin{theorem}\linktoproof{thm:cd_approach}\label{thm:cd_approach}
    For all $t\in \{0, 1, \ldots, \T\}$ and $k\in \{0,1,\ldots, t-1\}$, the following properties are satisfied for \cref{alg:sparse_conjugate_directions}:
    \begin{enumerate}
    \item \label{property:cd_approach_orthogonality} It holds that $\innp{ \dt[t], \Q \dt[k]} = 0$.
    \item \label{property:cd_approach_orthogonality_gradient} We have that
    $
            \innp{\nabla \g (\xt[t]), \dt[k] }= 0
    $
    and
     $\nabla_i \g(\xt[t]) = 0$ for all $i\in \St[t-1]$.
    \item  \label{property:cd_approach_x_monotone}
    It is $\xx_i^{(t)} > 0$ for all $i\in \St[t-1]$, and $\zeroterm = \xt[0]=\xtast[0]  \leq \xt[1]=\xtast[1]\leq \ldots \leq \xt[\T]= \xtast[\T]$. 
    \item \label{property:cd_approach_optimality} It holds that $\xt[\T] = \xast$.
    \end{enumerate}
\end{theorem}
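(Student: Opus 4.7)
The plan is to prove all four properties simultaneously by strong induction on $t$. The base case $t=0$ is immediate: $\xt[0]=\zeroterm$, $\Sinit=\emptyset$, and each quantifier over $k<0$ or $i\in\Sinit$ is vacuous. For the inductive step, I assume the four properties hold up through iteration $t$ and must establish them at iteration $t+1$.

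The first two properties are essentially the classical conjugate-directions bookkeeping. Property \ref{property:cd_approach_orthogonality} is the Gram-Schmidt calculation: expanding $\dt[t+1]=\ut[t+1]+\sum_{j\leq t}\beta_j^{(t+1)}\dt[j]$ and using the inductive $\Q$-orthogonality of $\dt[0],\ldots,\dt[t]$ collapses $\innp{\dt[t+1],\Q\dt[k]}$ to a single cross term, which cancels against $\beta_k^{(t+1)}\innp{\dt[k],\Q\dt[k]}$ by the defining formula (whose value, per the comment in \cref{alg:sparse_conjugate_directions}, is $-\innp{\ut[t+1],\Q\dt[k]}/\innp{\dt[k],\Q\dt[k]}$). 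For Property \ref{property:cd_approach_orthogonality_gradient}, the exact line-search stepsize $\eta^{(t)}$ is engineered so $\innp{\nabla\g(\xt[t+1]),\dt[t]}=0$; for $k<t$, affinity of $\nabla\g$ together with the induction hypothesis and Property \ref{property:cd_approach_orthogonality} give
\[
\innp{\nabla\g(\xt[t+1]),\dt[k]}=\innp{\nabla\g(\xt[t]),\dt[k]}+\eta^{(t)}\innp{\Q\dt[t],\dt[k]}=0.
\]
The coordinate-wise statement $\nabla_i\g(\xt[t+1])=0$ on $\St[t]$ then follows once I verify the identity $\spann{\{\dt[0],\ldots,\dt[t]\}}=\spann{\{\canonical[i]\mid i\in\St[t]\}}$, which holds by induction because each $\ut[k]$ is a nonzero multiple of $\canonical[i^{(k)}]$ (nonzero since $i^{(k)}\in N^{(k)}$), and the two subspaces have equal dimension.

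The main obstacle is Property \ref{property:cd_approach_x_monotone}, which bridges the unconstrained-subspace viewpoint of conjugate directions with the constrained minimizer $\xtast[t+1]$. By Property \ref{property:cd_approach_orthogonality_gradient} just proved, $\xt[t+1]$ is the unique unconstrained minimizer of $\g$ on $\spann{\{\canonical[i]\mid i\in\St[t]\}}$ (unique by strong convexity of the restriction). To identify it with $\xtast[t+1]$ and simultaneously obtain monotonicity and non-negativity, I would invoke \cref{proposition:pgd_helper} with $S\defi\St[t]$ and starting point $\xt[t]$. The hypotheses are met inductively: $\xt[t]_i=0$ for $i\notin\St[t-1]\subseteq\St[t]$, $\nabla_i\g(\xt[t])=0$ on $\St[t-1]$, and $\nabla_{i^{(t)}}\g(\xt[t])<0$ by the choice $i^{(t)}\in N^{(t)}$. \cref{property:pgd_helper_monotone} then yields $\xt[t]\leq\xtast[t+1]$ with $\nabla_i\g(\xtast[t+1])=0$ on $\St[t]$; uniqueness of the unconstrained minimizer in the span forces $\xt[t+1]=\xtast[t+1]$, and hence $\xt[t+1]\geq\xt[t]\geq\zeroterm$. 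Strict positivity $\xt[t+1]_i>0$ on $\St[t]$ follows from \cref{property:pgd_helper_positivity}, using either $\xt[t]_i>0$ inductively for $i\in\St[t-1]$ or $\nabla_{i^{(t)}}\g(\xt[t])<0$ at $i=i^{(t)}$.

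Finally, Property \ref{property:cd_approach_optimality} is immediate from the termination criterion: $N^{(\T)}=\emptyset$ means $\nabla_i\g(\xt[\T])\geq 0$ for every $i\in[\n]$, which combined with $\xt[\T]\geq\zeroterm$, $\xt[\T]_i=0$ for $i\notin\St[\T-1]$, and $\nabla_i\g(\xt[\T])=0$ on $\St[\T-1]$ (Property \ref{property:cd_approach_orthogonality_gradient}) verifies the KKT conditions \eqref{eq:new_optimality_conditions}, so $\xt[\T]=\xast$ by uniqueness of the minimizer.
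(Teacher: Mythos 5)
Your proposal is correct and follows essentially the same route as the paper's proof: induction with the classical Gram--Schmidt/exact-line-search identities for Properties 1--2, the span identity to get coordinatewise zero gradients, an appeal to \cref{proposition:pgd_helper} (with $S=\St[t]$ and starting point $\xt[t]=\xtast[t]$) plus uniqueness of the minimizer over $\spann{\{\canonical[i]\mid i\in\St[t]\}}$ to identify $\xt[t+1]=\xtast[t+1]$ and obtain monotonicity/positivity, and the KKT conditions \eqref{eq:new_optimality_conditions} at termination for Property 4. No gaps; your explicit justification of the span equality and of $\dt[t]\neq\zeroterm$ is a detail the paper leaves implicit.
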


Unlike our next algorithm, \aspr{}, the time complexity of \cref{alg:sparse_conjugate_directions} does not depend on $\alpha$, $\L$, or $\epsilon$, and we optimize exactly. We detail the computational complexities of our algorithm below.

\begin{theorem}[Computational complexities]\label{thm:cd_approach_complexity}\linktoproof{thm:cd_approach_complexity}
    The time complexity of \cref{alg:sparse_conjugate_directions}
    is
    $\bigo{| \suppast |^3 + | \suppast |  \vol(\suppast)}$ and its space complexity is $\bigo{| \suppast |^2}$.
\end{theorem}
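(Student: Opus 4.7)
My plan is to determine the number of iterations and bound the per-iteration work, then aggregate. By \cref{thm:cd_approach}, at every iteration $t$ the iterate $\xt[t]$ coincides with $\xtast[t]=\argmin_{\xx\in\Ct[t-1]}\g(\xx)$, and a single new index $i^{(t)}\in N^{(t)}$ is added to $\St[t-1]$. The same $M$-matrix argument used in the proof of \cref{thm:cd_approach} further shows $N^{(t)}\subseteq\suppast$: for any $i\notin\suppast$, $\nabla_i\g(\xt[t]) - \nabla_i\g(\xast) = (\Q(\xt[t]-\xast))_i \geq 0$ because $\xt[t]\leq\xast$, $\xt[t][i] = \xast[i] = 0$, and $\Q$ has non-positive off-diagonal entries; combined with $\nabla_i\g(\xast)\geq 0$ this forces $\nabla_i\g(\xt[t])\geq 0$. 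Hence a genuinely new element of $\suppast$ is added at every iteration and the loop terminates after exactly $\T = |\suppast|$ iterations.

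I would then bound the cost of iteration $t$ using the invariant $\supp(\dt[k])\subseteq\St[k]$, which follows by induction from $\dt[k] = \ut[k] + \sum_{j<k}\beta_j^{(k)}\dt[j]$, and assuming an $O(1)$-lookup sparse representation of $\Q$ (e.g.,\ an adjacency list in the PageRank case). The purely linear-algebraic part of the while-loop body, namely the $t$ coefficients $\beta_k^{(t)}$ (each an inner product with the sparse $\dtbar[k]$), the assembly of $\dt[t]$ as a sum of $t$ vectors each of support at most $t+1$, the computation of $\innp{\dt[t],\Q\dt[t]}$ (which collapses to $\innp{\ut[t],\Q\dt[t]}$ via the $\Q$-orthogonality of the previous directions established in \cref{thm:cd_approach}), and the iterate update, costs $O(t^2)$ at step $t$, summing to $\sum_{t=0}^{\T-1} O(t^2) = O(|\suppast|^3)$ across iterations.

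The only remaining work per iteration is evaluating $\nabla\g(\xt[t+1])$ in order to detect $N^{(t+1)}$. Since $\supp(\xt[t+1])\subseteq\suppast$, this can be done in $O(\vol(\suppast))$ time using the sparse representation of $\xt[t+1]$ (or incrementally via $\nabla\g(\xt[t+1])=\nabla\g(\xt[t])+\eta^{(t)}\Q\dt[t]$, with the analogous bound $\sum_{j\in\St[t]}\nnz(\Q_{:,j})\leq\vol(\suppast)$). Aggregating over $|\suppast|$ iterations contributes $O(|\suppast|\vol(\suppast))$, yielding the claimed total time complexity.

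For space, the persistent cost is dominated by the conjugate directions $\{\dt[k]\}_{k=0}^{\T-1}$ stored sparsely with support in $\suppast$, which is $O(|\suppast|^2)$; the normalization scalars $\innp{\dt[k],\Q\dt[k]}$, the sparse iterate $\xt[t]$, and the active set $\St[t]$ add only $O(|\suppast|)$, and the $O(\vol(\suppast))$-sized buffer needed to evaluate the gradient can be reclaimed at the end of each iteration, leaving a peak persistent footprint of $O(|\suppast|^2)$. The most delicate point in the write-up is the bookkeeping of the gradient evaluation: in particular, one must argue that searching $N^{(t+1)}$ does not require examining all $\n$ coordinates but only those in the neighborhood of $\St[t]$ (of size at most $\vol(\suppast)$), which again follows from the $M$-matrix structure together with the invariants of \cref{thm:cd_approach}.
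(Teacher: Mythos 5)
Your proposal is correct and follows essentially the same route as the paper's proof: at most $\sparsity$ iterations (each adding one new coordinate of $\suppast$), an $\bigo{\sparsity^2}$ per-iteration Gram--Schmidt cost summing to $\bigo{\sparsity^3}$, an $\bigo{\vol(\suppast)}$ per-iteration cost for the gradient/$N^{(t+1)}$ update, and $\bigo{\sparsity^2}$ space dominated by the stored directions. The differences are only cosmetic, e.g.\ you evaluate $\innp{\dt[t],\Q\dt[t]}$ via $\Q$-orthogonality as $\innp{\ut[t],\Q\dt[t]}$ and make the $N^{(t)}\subseteq\suppast$ argument explicit, where the paper bounds the former by $\bigo{\intvol(\suppast)}$ and relies on \cref{thm:cd_approach}; both fit the same totals.
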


\let\xtast\oldxtast
\let\xt\oldxt
\let\St\oldSt
\let\Ct\oldCt
\let\Sinit\oldSinit

}

\section{Accelerated Sparse PageRank}\label{sec:apgdappr}
{ 
\renewcommand\T{\newlink{def:final_iteration_T_of_ASPR}{T}}

\let\oldxtast\xtast
\let\oldxt\xt
\let\oldSt\St
\let\oldCt\Ct
\let\oldSinit\Sinit

\let\xtast\undefined
\let\xt\undefined
\let\St\undefined
\let\Ct\undefined
\let\Sinit\undefined

\NewDocumentCommand{\xtast}{oo}{
    \newlink{def:optimizer_in_subspace_for_ASPR}{
        \IfNoValueTF{#1}
            {\xx^{(\ast, t)}}
        {
        \IfNoValueTF{#2}
            {\xx^{(\ast, #1)}}
            {x^{(\ast, #1)}_{#2}}
        }
    }
}

\NewDocumentCommand{\xt}{oo}{
    \newlink{def:iterate_xt_of_ASPR}{
        \IfNoValueTF{#1}{
            {\xx^{(t)}}
        }
        {
        \IfNoValueTF{#2}
            {\xx^{(#1)}}
            {x^{(#1)}_{#2}}
        }
    }
}

\newcommand\Sinit{\newlink{def:initial_set_of_known_good_coordinates_ASPR}{S^{( -1 )}}}
\newcommandx*\St[1][1=t, usedefault]{\newlink{def:set_of_known_good_coordinates_ASPR}{S^{( #1 )}}}
\newcommandx*\Ct[1][1=t, usedefault]{\newlink{def:span_of_known_good_coordinates_in_Rp_ASPR}{C^{( #1 )}}}

\begin{algorithm}
    \caption{Accelerated projected gradient descent (\apgd{})} \label{alg:apgd}
\begin{algorithmic}[1]
    \REQUIRE  Closed and convex set $ C\subseteq \R^{\n}$, initial point $\xt[0] \in  C$, $f\colon  C \to \R$ an $\alpha$-strongly convex and $\L$-smooth function, condition number $\kappa \defi \L/\alpha$, and $T\in \N$.
    \ENSURE $\y[T]\in  C$.
    \vspace{0.1cm}
    \hrule
    \vspace{0.1cm}
    \State $\z[0] \gets\y[0] \gets \x[0]$; \ \ $A_0 \gets 0$; \ \ $a_0\gets 1$
    \FOR{$t= 0, 1, \ldots, T-1$}
      \State $A_{t+1} \gets A_{t} + a_{t}$ \Comment{equal to $A_t (\frac{2\kappa}{2\kappa +1 - \sqrt{1+4\kappa}}) \geq A_t(1-\frac{1}{2\sqrt{\kappa}})^{-1}$ if $t\geq 1$}
      \State $\x[t+1] \gets \frac{A_{t}}{A_{t+1}}\y[t] + \frac{a_{t}}{A_{t+1}} \z[t]$
      \State $\z[t+1] \gets \proj{C}\left( \frac{\kappa-1+ A_{t}}{\kappa-1+A_{t+1}} \z[t] +  \frac{a_{t}}{\kappa-1+ A_{t+1}} \left(\x[t+1] - \frac{1}{\alpha}\nabla f(\x[t+1])\right) \right)$
      \State $\y[t+1] \gets \frac{A_{t}}{A_{t+1}}\y[t] + \frac{a_{t}}{A_{t+1}} \z[t+1]$ 
      \State $a_{t+1} \gets A_{t+1}(\frac{2\kappa}{2\kappa +1 - \sqrt{1+4\kappa}}-1)$
      \ENDFOR
\end{algorithmic}
\end{algorithm}

In this section, we introduce  the accelerated sparse PageRank algorithm (\newtarget{def:acronym_accelerated_sparse_pagerank}{\aspr{}}) in \cref{alg:sparse_acceleration}, which is an approach based on accelerated projected gradient descent (\newtarget{def:acronym_accelerated_gradient_descent}{\apgd{}}) for addressing \eqref{eq:opt_equivalent}. 
Let $\xtast[0] = \zeroterm$, let $\Sinit = \emptyset$, and for $t\in[\T]$, let $\xtast[t] =\argmin_{\xx\in \Ct[t-1]} \g(\xx)$.
We now explain the necessary modifications to the exact algorithm outlined in \cref{sec:intuition} such that an approximate solver of \eqref{eq:algs_solve_this} can be incorporated. 
First, we recall the convergence of accelerated projected gradient descent (\apgd{}) \citep{nesterov1998introductory} in \cref{alg:apgd}, which is used as a subroutine in \cref{alg:sparse_acceleration}.
\apgd{} applied to the set $C\subseteq \R^{\n}$, initial point $\xt[0]\in  C$, objective $f\colon C \to \R$, and number of iterations $\T\in\N$ is denoted by $\xx\gets\apgd{}( C, \xt[0], f, \T)$. For strongly convex objectives, \apgd{} enjoys the following convergence rate.

\begin{proposition}[Convergence rate of \apgd{}]\linktoproof{prop:apgd}\label{prop:apgd} 
Let $ C\subseteq \R^{\n}$ be a closed convex set, $\xt[0] \in  C$, and $f\colon C\to\R$ an $\alpha$-strongly convex and $\L$-smooth function with minimizer $\xxast$. Then, for the iterates of \cref{alg:apgd}, it holds that
$
    f(\y[t]) - f(\xxast) \leq (1 - \frac{1}{2\sqrt{\kappa}})^{t-1} \frac{(\L-\alpha)\norm{\xt[0]-\xxast}^2}{2},
$
for $\kappa\defi\frac{\L}{\alpha}$.
    We thus obtain an $\epsilon$-minimizer in $\T = 1+\ceil{2\sqrt{\kappa}\log(\frac{(\L-\alpha)\norm{\x[0]-\xxast}^2}{2\epsilon})} $
    $\leq 1+\ceil{2\sqrt{\kappa}\log(\frac{(\L-\alpha)\norm{\nabla f(\xt[0])}_2^2}{2\epsilon\alpha^2})}$ iterations.
\end{proposition}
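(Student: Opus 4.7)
The plan is to use a standard potential-function (Lyapunov) argument for Nesterov-style accelerated methods, adapted to the strongly convex, constrained setting. Define the Lyapunov function
\[
\Phi_t \defi A_t\bigl(f(\y[t]) - f(\xxast)\bigr) + \frac{(\L-\alpha) + \alpha A_t}{2}\,\norm{\z[t] - \xxast}^2,
\]
whose second coefficient equals $\alpha(\kappa - 1 + A_t)$, matching the denominator appearing in the update of $\z[t+1]$. The proof then has two parts: show that $A_t$ grows geometrically and that $\Phi_t$ is non-increasing; combining these with the initial value $\Phi_0$ yields the rate.

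For the growth of $A_t$, substituting the expression for $a_{t+1}$ into $A_{t+2} = A_{t+1} + a_{t+1}$ gives $A_{t+1} = A_t \cdot \frac{2\kappa}{2\kappa + 1 - \sqrt{1+4\kappa}}$ for $t \geq 1$, while $A_1 = A_0 + a_0 = 1$. The inequality $\frac{2\kappa}{2\kappa + 1 - \sqrt{1+4\kappa}} \geq (1 - \tfrac{1}{2\sqrt{\kappa}})^{-1}$ reduces, after clearing denominators, to $\sqrt{1+4\kappa} \geq 1 + \sqrt{\kappa}$, which squares to $3\kappa \geq 2\sqrt{\kappa}$ and holds whenever $\kappa \geq 1$. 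Hence $A_t \geq (1 - \tfrac{1}{2\sqrt{\kappa}})^{-(t-1)}$ for $t \geq 1$.

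For the monotonicity $\Phi_{t+1} \leq \Phi_t$, I would proceed in the standard fashion. First, use $\L$-smoothness at $\x[t+1]$ together with the identity $\y[t+1] - \x[t+1] = \frac{a_t}{A_{t+1}}(\z[t+1] - \z[t])$, which follows from the definitions of $\x[t+1]$ and $\y[t+1]$, to bound $f(\y[t+1])$. Next, split $A_{t+1} f(\x[t+1]) = A_t f(\x[t+1]) + a_t f(\x[t+1])$ and invoke convexity at $\x[t+1]$ against $\y[t]$ and strong convexity against $\xxast$ (picking up the remainder $\tfrac{\alpha}{2}\norm{\xxast - \x[t+1]}^2$). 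The resulting term $a_t\innp{\nabla f(\x[t+1]), \xxast - \x[t+1]}$ is then absorbed by the first-order optimality of the projection defining $\z[t+1]$, combined with the three-point identity $\norm{\z[t] - \xxast}^2 = \norm{\z[t+1] - \xxast}^2 + \norm{\z[t] - \z[t+1]}^2 + 2\innp{\z[t] - \z[t+1], \z[t+1] - \xxast}$. The precise choice of the coefficient $(\L-\alpha) + \alpha A_t$ makes the cross terms cancel and closes the induction.

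Finally, $\Phi_0 = \frac{\L-\alpha}{2}\norm{\xt[0] - \xxast}^2$ since $A_0 = 0$ and $\z[0] = \xt[0]$, so $f(\y[t]) - f(\xxast) \leq \Phi_t/A_t \leq \Phi_0/A_t$, yielding the stated rate. For the iteration count, solving $(1 - \tfrac{1}{2\sqrt{\kappa}})^{t-1} \tfrac{(\L-\alpha)\norm{\xt[0]-\xxast}^2}{2} \leq \epsilon$ together with $\log((1 - \tfrac{1}{2\sqrt{\kappa}})^{-1}) \geq \tfrac{1}{2\sqrt{\kappa}}$ gives the first expression for $\T$. The alternative bound follows from the strong-convexity consequence $\norm{\xt[0] - \xxast} \leq \norm{\nabla f(\xt[0])}/\alpha$, obtained by chaining the Polyak–Łojasiewicz inequality $f(\xt[0]) - f(\xxast) \leq \norm{\nabla f(\xt[0])}^2/(2\alpha)$ with the quadratic growth $\tfrac{\alpha}{2}\norm{\xt[0]-\xxast}^2 \leq f(\xt[0]) - f(\xxast)$. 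The main technical obstacle is the verification of the potential decrement: the unusual factor $\frac{2\kappa}{2\kappa+1-\sqrt{1+4\kappa}}$ appearing in $a_{t+1}$ is precisely what makes the coefficients of $\norm{\z[t+1] - \xxast}^2$, the inner-product cross term with $\nabla f(\x[t+1])$, and the $\L$-smoothness residual $\norm{\y[t+1] - \x[t+1]}^2$ cancel simultaneously.
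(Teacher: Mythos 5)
Your proposal is correct in substance, but it takes a different route from the paper: the paper does not prove the rate itself at all — it invokes \citet[Theorem~4.10]{diakonikolas2019approximate} for the first inequality and only proves the iteration-count corollary (exactly as you do, via $(1+x)\le e^x$ and $\norm{\xt[0]-\xxast}\le \norm{\nabla f(\xt[0])}_2/\alpha$ from strong convexity) — whereas you supply a self-contained potential-function argument. Your Lyapunov function and the verification of the growth of $A_t$ are right: with $q\defi\frac{2\kappa}{2\kappa+1-\sqrt{1+4\kappa}}=\frac{\sqrt{1+4\kappa}+1}{\sqrt{1+4\kappa}-1}$ one has $A_1=1$ and $A_{t+1}=qA_t$ for $t\ge 1$, and $q\ge(1-\frac{1}{2\sqrt{\kappa}})^{-1}$ reduces as you say to $\kappa\ge 4/9$. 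The one place where you assert rather than verify is the decrement $\Phi_{t+1}\le\Phi_t$; for completeness, note that the update of $\zz^{(t+1)}$ is the minimizer over $C$ of $a_t\innp{\nabla f(\xx^{(t+1)}),\zz-\xx^{(t+1)}}+\frac{a_t\alpha}{2}\norm{\zz-\xx^{(t+1)}}^2+\frac{\alpha(\kappa-1+A_t)}{2}\norm{\zz-\zz^{(t)}}^2$, and after combining smoothness, convexity against $\yy^{(t)}$, strong convexity against $\xxast$, and the strongly convex three-point optimality of this subproblem, the argument closes provided $\kappa a_t^2\le A_{t+1}(\kappa-1+A_t)$. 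For $t\ge1$ this holds because $\kappa(q-1)^2=q$ exactly (so the condition becomes $A_t\le\kappa-1+A_t$), while at $t=0$ the inequality fails as stated and one must additionally use the term $\frac{a_0\alpha}{2}\norm{\zz^{(1)}-\xx^{(1)}}^2$ together with $\xx^{(1)}=\zz^{(0)}$ (a consequence of $A_0=0$), which is precisely what produces the exponent $t-1$ rather than $t$. With these two checks spelled out, your proof is complete; its advantage over the paper's citation is that it makes the specific constants $(\L-\alpha)/2$, the shift $t-1$, and the compatibility of the projection step with the claimed rate explicit for this exact parameterization, at the cost of reproducing a known argument.
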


As in the \cref{alg:sparse_conjugate_directions} in the previous section, our \cref{alg:sparse_acceleration} constructs a sequence of subsets $\St[t]$ of the support of $\xast$. 
In contrast to \cdappr{}, \cref{alg:sparse_acceleration} does not compute $\xtast[t+1]$ for $t\in \{0,1,\ldots, \T-1\}$ exactly, but instead employs \apgd{} as a subroutine to construct a point $\xtbar[t+1]$ that is close enough to $\xtast[t+1]$, and then reduces all positive entries of $\xtbar[t+1]$ slightly, obtaining $\xt[t+1] \leq \xtast[t+1]$. 
The following \cref{lemma:A_positive} establishes that if a coordinate of a point is decreased, the gradient of $\g$ at all other coordinates does not decrease, implying that for all points $\xx\in\R^n_{\geq 0}$ satisfying $\xx\leq \xtast[t+1]$, no bad coordinate has a negative gradient.

\begin{algorithm}
    \caption{Accelerated sparse PageRank algorithm (\aspr{})} \label{alg:sparse_acceleration}
\begin{algorithmic}[1]
    \REQUIRE Quadratic function $g:\R^{\n}\to \R$ with Hessian $\Q \succ 0$ being a symmetric $M$-matrix, accuracy $\epsilon > 0$. The $\ell_1$-regularized PageRank problem corresponds to choosing $\g$ as in \eqref{eq:g}.
    
    \ENSURE $\xt[\T]$, where $\newtarget{def:final_iteration_T_of_ASPR}{\T} \in \N$ is the first iteration for which $\St[\T] = \St[\T-1]$.
    \vspace{0.1cm}
    \hrule
    \vspace{0.1cm}
    \State $t\gets 0$
    \State $\xt[0] \gets \zeroterm$
    \State $\St[t] \gets \{i\in[\n] \mid \nabla_i \g(\xt[t]) < 0\}$
    \WHILE{$\St[t] \neq \St[t-1]$}
        \State $\newtarget{def:retraction_parameter_delta_t}{\delta_t} \gets \sqrt{\frac{\epsilon\alpha}{(1+\card{\St[t]})\L^2}}$
        \State $\newtarget{def:accuracy_parameter_of_APGD_subproblem}{\hatepsilon}_t \gets \frac{\delta_t^2 \alpha }{2} = \frac{\epsilon\alpha^2}{2(1+\card{\St[t]})\L^2}$
        \State $\newtarget{def:span_of_known_good_coordinates_in_Rp_ASPR}{\Ct[t]} \gets \spann{\{\canonical[i] \mid i \in \St[t]\} } \cap \R^n_{\geq 0}$
        \State $\newtarget{def:iterate_before_pulling_towards_zero}{\xtbar[t+1]} \gets \apgd{}\left(\Ct[t], \xt[t], \g, 1+\Big\lceil 2\sqrt{\kappa}\log\left(\frac{(\L-\alpha)\norm{\nabla_{\St[t]} \g(\xt[t])}_2^2}{2\hatepsilon_t\alpha^2}\right)\Big\rceil\right)$ 
        \State $\newtarget{def:iterate_xt_of_ASPR}{\xt[t+1]} \gets \max\{\zeroterm, \xtbar[t+1]-\delta_t \oneterm\}$ \Comment{coordinatewise $\max$, only needed for $i\in \St[t]$}
        \State $\newtarget{def:set_of_known_good_coordinates_ASPR}{\St[t+1]} \gets  \St[t] \cup\{i\in[\n] \mid \nabla_i \g(\xt[t+1]) < 0\}$ \label{line:expanding_S_t}
        \State $t\gets t + 1$
    \ENDWHILE
\end{algorithmic}
\end{algorithm}

\begin{lemma}\linktoproof{lemma:A_positive}\label{lemma:A_positive}
    Let $\xx \in \R^{\n}$, and let $\yy = \xx - \epsilon  \canonical[i]$, for some $\epsilon > 0$, $i\in[\n]$. Then, for all $j\in [\n]\setminus\{i\}$, it holds that $\nabla_j \g(\yy) \geq \nabla_j \g(\xx)$. If instead $\epsilon < 0$, then $\nabla_j \g(\yy) \leq \nabla_j \g(\xx)$.
\end{lemma}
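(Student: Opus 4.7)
The plan is to exploit the fact that $\g$ is a quadratic with Hessian $\Q$, so its gradient is affine, and then invoke the $M$-matrix sign structure on the off-diagonal entries of $\Q$. Concretely, writing $\g(\xx) = \frac{1}{2}\innp{\xx,\Q\xx} + \innp{\cc,\xx}$ for the appropriate linear term $\cc$ (which is $-\bb$ in the general formulation, or $\alpha \D^{-1/2}(\ss+\rho\oneterm)$ in the PageRank case), we have $\nabla \g(\xx) = \Q\xx + \cc$, so for any $\yy = \xx - \epsilon\canonical[i]$ we compute
\begin{equation*}
    \nabla_j \g(\yy) - \nabla_j \g(\xx) = \Q_{j,:}(\yy - \xx) = -\epsilon\, \Q_{j,i}.
\end{equation*}

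The next step is to invoke the $M$-matrix hypothesis on $\Q$. By assumption, $\Q$ is a positive-definite $M$-matrix, so $\Q_{j,i} \leq 0$ whenever $j \neq i$. Therefore, for $\epsilon > 0$ the right-hand side above is non-negative, giving $\nabla_j \g(\yy) \geq \nabla_j \g(\xx)$ for every $j \neq i$; and for $\epsilon < 0$ the sign flips, giving $\nabla_j \g(\yy) \leq \nabla_j \g(\xx)$.

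There is no real obstacle here: the lemma is essentially just the combination of two observations, namely affineness of the gradient of a quadratic and the off-diagonal sign pattern of an $M$-matrix. The only thing one should be careful about is to state explicitly that the computation is performed coordinatewise and to use the general quadratic form $\g(\xx) = \innp{\xx,\Q\xx} - \innp{\bb,\xx}$ (or equivalently \eqref{eq:g}) so that the statement applies to both the $\ell_1$-regularized PageRank setting and the more general $M$-matrix quadratic minimization studied in the paper.
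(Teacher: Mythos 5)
Your proposal is correct and follows essentially the same argument as the paper: both compute $\nabla_j \g(\yy) - \nabla_j \g(\xx) = -\epsilon\, \Q_{j,i}$ from affineness of the gradient of the quadratic and then conclude from the non-positivity of the off-diagonal entries of the $M$-matrix $\Q$. No gaps to report.
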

The second part of \cref{lemma:A_positive} implies that we only have $\nabla_i \g(\xt[t+1]) < 0$ for coordinates $i$ for which $\nabla_i \g(\xtast[t+1]) < 0$, but it suggests that there could be none satisfying the former. To address this issue, \apgd{} is run to sufficient accuracy to guarantee that $\g(\xt[t+1]) - \g(\xtast[t+1]) \leq \frac{\epsilon \alpha}{\L}$. Then, we show that either $\g(\xt[t+1]) -\g(\xast) \leq \epsilon$ or one step of \pgd{} from $\xt[t+1]$ would make more progress than what we can do in the current space $\Ct[t]$, of which $\xtast[t+1]$ is minimizer, and so the gradient contains a negative entry. All such entries are good coordinates $i\in\suppast\setminus \St[t]$, similarly to what we had at $\xtast[t+1]$ in \cdappr{}. We note that unlike for \cdappr{}, this time we can incorporate all of these coordinates at once to the algorithm. In \cref{prop:apgd_approach} below, we address all these challenges associated with computing $\xt[t+1]$ in \cref{alg:sparse_acceleration} in lieu of $\xtast[t+1]$, and we prove that indeed \cref{alg:sparse_acceleration} finds an $\epsilon$-minimizer of $\g$, while all the iterates are sparse, if the solution $\xast$ is sparse.

\begin{theorem}\linktoproof{prop:apgd_approach}\label{prop:apgd_approach}
    Let $\newtarget{def:initial_set_of_known_good_coordinates_ASPR}{\Sinit} \defi \emptyset$, $\xtast[-1]\defi \zeroterm$, $\xtast[0] \defi \zeroterm$, and define $\newtarget{def:optimizer_in_subspace_for_ASPR}{\xtast[t]} \defi \argmin_{\xx\in \Ct[t-1]} \g(\xx)$ for $t \in[\T]$, where $\Ct[t-1]$ is defined in \cref{alg:sparse_acceleration}. For all $t \in \{0, 1, \dots, \T\}$, the following properties are satisfied for \cref{alg:sparse_acceleration}:
    \begin{enumerate}
        \item \label{property:apgd_approach_positivity_grad} It holds $\xtast[t][i] > 0$ if and only if $i \in \St[t-1]$. We also have $\nabla_i \g(\xtast[t]) = 0$ if $i\in \St[t-1]$.
        \item \label{property:apgd_approach_x_monotone} It is $ \xt[t] \leq \xtast[t] \leq \xast$ and $\xtast[t-1] \leq \xtast[t]$.
        \item \label{property:apgd_approach_S_monotone} Our set of known good indices expands $\St[t-1] \subsetneq \St[t] \defi \St[t-1] \cup \left\{i\in [\n] \mid \nabla_i \g(\xt[t])<0 \right\} \subseteq \suppast$, or $\xt[t]$ is an $\epsilon$-minimizer of $\g$. In particular, $\g(\xt[\T]) - \g(\xast) \leq \epsilon$.
    \end{enumerate}
\end{theorem}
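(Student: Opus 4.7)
The plan is to prove all three properties simultaneously by induction on $t$. The base case $t=0$ is immediate: $\xt[0] = \xtast[0] = \zeroterm$ and $\St[-1] = \emptyset$, so properties 1 and 2 hold trivially; for property 3, any $i$ with $\nabla_i \g(\zeroterm) < 0$ must lie in $\suppast$, since \cref{lemma:A_positive} applied to the transition $\xast \to \zeroterm$ (which only decreases coordinates in $\suppast$, and for $i \notin \suppast$ one has $\xast[i] = 0$ already) would otherwise force $\nabla_i \g(\xast) \leq \nabla_i \g(\zeroterm) < 0$, contradicting the KKT conditions for $\xast$.

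For the inductive step, the key move for properties 1 and 2 is to apply \cref{proposition:pgd_helper} not to $\xt[t]$ but to $\xtast[t]$ with $S = \St[t]$. The hypotheses are verified as follows: $\xtast[t]$ is supported in $\St[t-1] \subseteq \St[t]$; for $i \in \St[t-1]$ the inductive hypothesis gives $\nabla_i \g(\xtast[t]) = 0$; and for $i \in \St[t] \setminus \St[t-1]$, the definition of $\St[t]$ gives $\nabla_i \g(\xt[t]) < 0$, and since $\xtast[t] - \xt[t] \geq \zeroterm$ is supported in $\St[t-1]$ (disjoint from such $i$), \cref{lemma:A_positive} gives $\nabla_i \g(\xtast[t]) \leq \nabla_i \g(\xt[t]) < 0$. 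The three conclusions of \cref{proposition:pgd_helper} then deliver $\xtast[t] \leq \xtast[t+1]$ with $\nabla_i \g(\xtast[t+1]) = 0$ on $\St[t]$, the strict positivity $\xtast[t+1][i] > 0$ for all $i \in \St[t]$, and $\xtast[t+1] \leq \xast$, establishing property 1 and most of property 2 at $t+1$. The remaining inequality $\xt[t+1] \leq \xtast[t+1]$ uses strong convexity: running \apgd{} to accuracy $\hatepsilon_t$ yields $\|\xtbar[t+1] - \xtast[t+1]\|_\infty \leq \|\xtbar[t+1] - \xtast[t+1]\|_2 \leq \sqrt{2\hatepsilon_t/\alpha} = \delta_t$, and then the coordinatewise pullback $\xt[t+1] = \max\{\zeroterm, \xtbar[t+1] - \delta_t \oneterm\}$ lies at or below $\xtast[t+1]$ on $\St[t]$ (checking the two cases of the $\max$ against $\xtbar[t+1][i] \leq \xtast[t+1][i] + \delta_t$) and coincides with $\xtast[t+1]$ off $\St[t]$, where both are zero.

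For property 3, the containment $\St[t] \subseteq \suppast$ follows from the already-established $\xt[t] \leq \xast$, \cref{lemma:A_positive}, and the KKT conditions for $\xast$: for $i \notin \suppast$, both $\xt[t][i]$ and $\xast[i]$ vanish, so the decreases from $\xast$ to $\xt[t]$ happen only at indices $j \neq i$, forcing $\nabla_i \g(\xt[t]) \geq \nabla_i \g(\xast) \geq 0$ and excluding $i$ from $\St[t]$. For the alternative conclusion that $\xt[t]$ is an $\epsilon$-minimizer whenever $\St[t] = \St[t-1]$, I plan to bound the projected gradient mapping $G(\xt[t]) \defi \L(\xt[t] - \proj{\R^{\n}_{\geq 0}}(\xt[t] - \frac{1}{\L}\nabla \g(\xt[t])))$. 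The termination condition $\nabla_i \g(\xt[t]) \geq 0$ for $i \notin \St[t-1]$, together with $\xt[t][i] = 0$ there, gives $G_i(\xt[t]) = 0$ off $\St[t-1]$; on $\St[t-1]$ we use $|G_i(\xt[t])| \leq |\nabla_i \g(\xt[t])|$, smoothness combined with $\nabla_{\St[t-1]} \g(\xtast[t]) = 0$, and the triangle inequality $\|\xt[t] - \xtast[t]\|_2 \leq \delta_{t-1}(1 + \sqrt{|\St[t-1]|})$ to conclude $\|G(\xt[t])\|_2^2 \leq 2\epsilon\alpha$; the standard bound $\g(\xt[t]) - \g(\xast) \leq \|G(\xt[t])\|_2^2 / (2\alpha)$ for constrained strongly convex minimization completes the argument.

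The main obstacle is the tight numerical calibration of $\delta_t$ and $\hatepsilon_t$. The choices $\delta_t = \sqrt{\epsilon\alpha/((1+|\St[t]|)\L^2)}$ and $\hatepsilon_t = \delta_t^2 \alpha/2$ are precisely what makes the gradient-mapping bound collapse to $\epsilon$, because the \apgd{} error contributes $\delta_t$ and the pullback contributes $\delta_t\sqrt{|\St[t]|}$, whose squared sum is bounded by $2(1+|\St[t]|)\delta_t^2 = 2\epsilon\alpha/\L^2$. A secondary subtlety is that the pullback must be large enough to guarantee $\xt[t+1] \leq \xtast[t+1]$ despite \apgd{} providing no coordinatewise domination of $\xtast[t+1]$, yet small enough that the terminal iterate remains genuinely $\epsilon$-close in objective value.
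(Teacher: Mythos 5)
Your induction for Properties 1 and 2 follows essentially the paper's route (apply \cref{proposition:pgd_helper} at $\xtast[t]$ with $S=\St[t]$, transferring the negative gradient signs from $\xt[t]$ to $\xtast[t]$ via \cref{lemma:A_positive}, and proving $\xt[t+1]\leq\xtast[t+1]$ from the $\ell_2$-bound $\norm{\xtbar[t+1]-\xtast[t+1]}_2\leq\delta_t$), and your direct argument for $\St[t]\subseteq\suppast$ (from $\xt[t]\leq\xast$, \cref{lemma:A_positive} and the optimality conditions \eqref{eq:new_optimality_conditions}) is a valid and in fact simpler alternative to the paper's contradiction argument. The genuine gap is in the $\epsilon$-minimizer step: the inequality you invoke as standard, $\g(\xt[t])-\g(\xast)\leq\norm{G(\xt[t])}_2^2/(2\alpha)$ with $G$ the projected gradient mapping evaluated at the \emph{current} point, is false for constrained strongly convex problems; the standard fact bounds the gap at the projected point $\proj{\Rp^{\n}}\bigl(\xt[t]-\tfrac1\L\nabla\g(\xt[t])\bigr)$, not at $\xt[t]$ itself. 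Concretely, take $\n=1$, $C=[0,\infty)$, $\g(x)=\tfrac12(x+M)^2$ (so $\alpha=\L=1$, $\xast=0$): at $x=1/M$ the gradient mapping equals $1/M$ while $\g(x)-\g(\xast)\approx 1$, so no bound of the form $c\,\norm{G(x)}^2/\alpha$ can hold in general.

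The gap is repairable with ingredients you already state, bypassing the gradient mapping entirely. At termination ($\St[t]=\St[t-1]$) you have, for $i\notin\St[t-1]$, $\xt[t][i]=0$ and $\nabla_i\g(\xt[t])\geq 0$, while $\xast\geq\zeroterm$; hence by strong convexity $\g(\xt[t])-\g(\xast)\leq\innp{\nabla\g(\xt[t]),\xt[t]-\xast}-\tfrac\alpha2\norm{\xt[t]-\xast}_2^2\leq\innp{\nabla_{\St[t-1]}\g(\xt[t]),(\xt[t]-\xast)_{\St[t-1]}}-\tfrac\alpha2\norm{(\xt[t]-\xast)_{\St[t-1]}}_2^2\leq\tfrac{1}{2\alpha}\norm{\nabla_{\St[t-1]}\g(\xt[t])}_2^2$, since the dropped off-support terms are nonpositive. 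Your own estimates $\norm{\nabla_{\St[t-1]}\g(\xt[t])}_2\leq\L\norm{\xt[t]-\xtast[t]}_2\leq\L\delta_{t-1}(1+\sqrt{\card{\St[t-1]}})$ and $(1+\sqrt{s})^2\leq 2(1+s)$ then give $\g(\xt[t])-\g(\xast)\leq\epsilon$ exactly as needed. With this substitution your proof is correct, and this part of it genuinely differs from the paper, which instead argues by contradiction that if $\g(\xt[t+1])-\g(\xast)>\epsilon$ then a single \pgd{} step from $\xt[t+1]$, via the $(1-\alpha/\L)$ one-step contraction of \citet{fountoulakis2019variational}, would produce a point with value below $\g(\xtast[t+1])$, forcing a negative gradient coordinate outside $\St[t]$; your argument avoids invoking that external contraction bound, at the price of the (fixable) misstep above.
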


Note that by the previous theorem, we have the chain $\zeroterm = \xtast[0] \leq \xtast[ 1] \leq \ldots \leq \xtast[ \T] \leq \xast$ and $\Sinit \subsetneq \St[0] \subsetneq \ldots \subsetneq \St[\T-1] = \St[\T] \subseteq \suppast$. This implies that every iterate of \cref{alg:sparse_acceleration} only updates coordinates in $\suppast$. Thus, the final computational complexity of this accelerated method, specified below, depends on the sparsity of the solution and related quantities, answering the question posed by \citep{fountoulakis2022open} in the affirmative.
\begin{theorem}[Computational complexities]\linktoproof{thm:apgd_approach_complexity}\label{thm:apgd_approach_complexity}
    The time complexity of \cref{alg:sparse_acceleration}
    is
\begin{align*}
    \bigotildel{\sparsity\intvol(\suppast)\sqrt{\frac{\L}{\alpha}} + \sparsity\vol(\suppast)},
\end{align*}
    and its space complexity is $\bigo{\sparsity}$.
\end{theorem}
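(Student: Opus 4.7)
My plan is to combine the structural invariants established in \cref{prop:apgd_approach} with the APGD rate of \cref{prop:apgd}, together with a careful accounting of the per-iteration work of \cref{alg:sparse_acceleration}.

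First I would bound the outer iteration count $\T$. By the third property of \cref{prop:apgd_approach}, every iteration that does not already terminate with an $\epsilon$-minimizer strictly enlarges $\St[t-1]\subsetneq\St[t]\subseteq\suppast$. Since $|\suppast|=\sparsity$, this forces $\T=O(\sparsity)$.

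Next I would cost a single outer iteration. The APGD call acts on $\Ct[t]\subseteq\spann{\{\canonical[i]:i\in\St[t]\}}\cap\Rp^{\n}$ with target accuracy $\hatepsilon_t=\epsilon\alpha^2/(2(1+|\St[t]|)\L^2)$, so by \cref{prop:apgd} it runs for $\bigotildel{\sqrt{\L/\alpha}}$ inner iterations; the hidden logarithmic factor is polynomial in $\log\n$, $\log(\L/\alpha)$, $\log(1/\epsilon)$ and $\log\|\nabla_{\St[t]}\g(\xt[t])\|_2$, and the last term is polylogarithmic in the input data because $\zeroterm\leq\xt[t]\leq\xast$ by the second property of \cref{prop:apgd_approach}. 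Since every APGD iterate lies in $\Ct[t]$ and is therefore supported on $\St[t]$, computing the restricted gradient $\nabla_{\St[t]}\g(\xx)=(\Q_{\St[t],\St[t]}\xx_{\St[t]}) + \text{const}$ uses only the submatrix $\Q_{\St[t],\St[t]}$ and costs $O(\intvol(\St[t]))\le O(\intvol(\suppast))$, while the projection onto $\Ct[t]$ is a coordinatewise $\max$ with zero and costs $O(\sparsity)$. Thus one APGD call costs $\bigotildel{\sqrt{\L/\alpha}\,\intvol(\suppast)}$.

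The remaining work in the iteration is the update $\St[t+1]\gets\St[t]\cup\{i:\nabla_i\g(\xt[t+1])<0\}$, which requires the full gradient $\nabla\g(\xt[t+1])=\Q\xt[t+1]+\text{const}$. Since $\xt[t+1]$ is supported on $\St[t]$, only indices $i$ with $\Q_{i,j}\neq 0$ for some $j\in\St[t]$ can have a gradient value differing from the base value at $\zeroterm$, and evaluating all such entries costs $\nnz(\Q_{:,\St[t]})=\vol(\St[t])\le\vol(\suppast)$. Summing both contributions over $\T=O(\sparsity)$ outer iterations gives the time bound $\bigotildel{\sparsity\,\intvol(\suppast)\sqrt{\L/\alpha}+\sparsity\,\vol(\suppast)}$. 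For the space bound, every stored object (the iterates $\xt[t]$ and $\xtbar[t+1]$, the internal APGD auxiliary vectors, and the sets $\St[t]$) has support in $\suppast$, so $O(\sparsity)$ memory suffices. The single item I would verify most carefully is the polynomial bound on $\|\nabla_{\St[t]}\g(\xt[t])\|_2$ needed to conclude that the APGD logarithm is polylogarithmic in the input data, which follows by combining $\xt[t]\le\xast$ from \cref{prop:apgd_approach} with a polynomial bound on $\|\xast\|_2$ obtained from the optimality conditions \eqref{eq:new_optimality_conditions}.
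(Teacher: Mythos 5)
Your proposal is correct and follows essentially the same route as the paper's proof: bounding the number of outer iterations by $\sparsity$ via \cref{prop:apgd_approach}, costing each \apgd{} call at $\bigotilde{\sqrt{\L/\alpha}}$ inner iterations of cost $\bigo{\intvol(\suppast)}$ plus a full-gradient step of cost $\bigo{\vol(\suppast)}$, and controlling the logarithmic factor through a polynomial bound on $\norm{\nabla_{\St[t]}\g(\xt[t])}_2$ obtained from $\zeroterm\leq\xt[t]\leq\xast$ and a bound on $\norm{\xast}$ (the paper does this via $\L$-smoothness and $\alpha$-strong convexity, matching your sketch). The space argument is likewise the same.
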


The question of \citet{fountoulakis2022open} suggested that one has to possibly trade-off lower dependence on the condition number for greater dependence on the sparsity. Surprisingly, the term $\sparsity \intvol(\suppast)$ multiplying the condition-number term can be smaller than the corresponding term $\vol(\suppast)$ of \ista{}, so in such a case the accelerated method also improves on the dependence on the sparsity, and it enjoys an overall lower running time if $\sparsity < \L/\alpha$, see \cref{sec:algorithmic_comparisons}.

\subsection[Variants of Algorithm~\ref{alg:sparse_acceleration}]{Variants of \cref{alg:sparse_acceleration}}
An attractive property of \cref{alg:sparse_acceleration} is that by performing minor modifications to it, we can exploit the geometry to stop the \apgd{} subroutine earlier, or we can naturally incorporate new lower bounds on the coordinates of $\xast$ into the algorithm.

\subsubsection[Early Termination of APGD in Algorithm \ref{alg:sparse_acceleration}]{Early Termination of \apgd{} in \aspr{}} 
We first present another lemma about the geometry of Problem \eqref{eq:opt_equivalent}.

\begin{lemma}\linktoproof{lemma:modification}\label{lemma:modification}
    Let $S$ be a set of indices such that  $\xtast[C] \defi \argmin_{\xx \in C} \g(\xx)$ satisfies $\xtast[C][j] > 0 $ if and only if $j \in S$, where $C \defi \spann{\{\canonical[j] \mid j \in S \}} \cap \Rp^{\n}$. Let $\xx\in\R^n_{\geq 0}$ be such that $x_j = 0$ if $j\not\in S$ and $\nabla_j \g(\xx) \leq 0$ if $j\in S$. Then, for any coordinate $i\not\in S$ such that $\nabla_i \g(\xx) < 0$, we have $i\in\suppast$. 
\end{lemma}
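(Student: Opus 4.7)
The plan is to apply \cref{proposition:pgd_helper} twice, once with the set $S$ and once with the enlarged set $S' \defi S \cup \{i\}$, using $\xx$ and $\xtast[C]$ respectively as the starting points. The key ingredient that makes the second application work is the $M$-matrix structure of $\Q$, which (via \cref{lemma:A_positive}) lets us transfer the strict negativity of $\nabla_i \g(\xx)$ to $\nabla_i \g(\xtast[C])$.

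First, I would apply \cref{proposition:pgd_helper} with the set $S$ and starting point $\xx$. The hypotheses are exactly the ones given: $\xx \in \Rp^{\n}$ with $x_j = 0$ for $j \notin S$ and $\nabla_j \g(\xx) \leq 0$ for $j \in S$. By \cref{property:pgd_helper_monotone}, we obtain $\xx \leq \xtast[C]$ and $\nabla_j \g(\xtast[C]) = 0$ for all $j \in S$.

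Next, I would show that $\nabla_i \g(\xtast[C]) < 0$ for the coordinate $i \notin S$ of interest. Since $i \notin S$, both $x_i = 0$ and $\xtast[C][i] = 0$, so the non-negative difference $\xtast[C] - \xx$ is supported in $S \setminus \{i\}$ and can be written as a sum of non-negative multiples of basis vectors $\canonical[j]$ with $j \neq i$. Iterating the second half of \cref{lemma:A_positive} (each coordinate increase away from $i$ does not increase $\nabla_i \g$) yields $\nabla_i \g(\xtast[C]) \leq \nabla_i \g(\xx) < 0$.

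Now I would apply \cref{proposition:pgd_helper} a second time, with the enlarged set $S' = S \cup \{i\}$ and starting point $\xtast[C]$. The hypotheses hold: $\xtast[C][j] = 0$ for $j \notin S'$ (since $S' \supseteq S$), while for $j \in S'$ the gradient is non-positive — equal to $0$ on $S$ by the first step, and strictly negative at $i$ by the second step. Let $C' \defi \spann{\{\canonical[j] \mid j \in S'\}} \cap \Rp^{\n}$ and $\xtast[C'] \defi \argmin_{\yy \in C'} \g(\yy)$. By \cref{property:pgd_helper_positivity}, $\xtast[C'][j] > 0$ for every $j \in S$ (using the hypothesis $\xtast[C][j] > 0$) and $\xtast[C'][i] > 0$ (using $\nabla_i \g(\xtast[C]) < 0$). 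Hence $\xtast[C'][j] > 0$ for all $j \in S'$, so \cref{property:pgd_helper_subset} gives $S' \subseteq \suppast$, and in particular $i \in \suppast$.

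The only non-routine step is the gradient-monotonicity argument in the middle paragraph: without the $M$-matrix structure of $\Q$, increasing coordinates of $\xx$ could destroy the sign of $\nabla_i \g$ that we need to re-feed into \cref{proposition:pgd_helper}. Once \cref{lemma:A_positive} is invoked, the rest is a clean two-step application of the structural proposition already proved.
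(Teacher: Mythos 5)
Your proof is correct and takes essentially the same route as the paper's: one application of \cref{proposition:pgd_helper} with $S$ and $\xx$, then transferring the strict negativity of $\nabla_i \g$ from $\xx$ to $\xtast[C]$ via \cref{lemma:A_positive}, and finally a second application of \cref{proposition:pgd_helper} with $S \cup \{i\}$ and starting point $\xtast[C]$, using \cref{property:pgd_helper_positivity,property:pgd_helper_subset} to conclude $i \in \suppast$. The only cosmetic difference is that you obtain positivity of the new minimizer on the coordinates of $S$ from \cref{property:pgd_helper_positivity} (since $\xtast[C][j] > 0$ there), whereas the paper invokes the monotonicity in \cref{property:pgd_helper_monotone}; both are valid.
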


By Statement~\ref{property:apgd_approach_positivity_grad} in \cref{prop:apgd_approach}, we can apply \cref{lemma:modification} with $\St[t]$ in \cref{alg:sparse_acceleration}, for any $t\in\{0,1,\ldots, \T-1\}$. This motivates the following modification to \cref{alg:sparse_acceleration}: In \aspr{}, if we compute the full gradient at each iteration (or every few iterations) in the \apgd{} subroutine, then, for an iterate $\xx$ in \apgd{}, if we have $\nabla_i \g(\xx) \leq 0$ for all $i\in \St[t]$ and we observe some $j \not\in \St[t]$ such that $\nabla_j \g(\xx) < 0$, then we can stop the \apgd{} subroutine, incorporate all such coordinates to $\St[t+1]$ and continue with the next iteration of \cref{alg:sparse_acceleration}.  
This modification does not come without drawbacks, since we need to compute full gradients in order to discover new coordinates early, instead of just gradients restricted to $\St[t]$. Interestingly, we can show that if we were to compute one full gradient for each iteration of the \apgd{} subroutine, then the complexity of the conjugate directions method is no worse than the upper bound on the complexity for this variant of \cref{alg:sparse_acceleration}, in the regime in which we prefer to use these algorithms over the \ista{}-approach in \citet{fountoulakis2019variational}.  Indeed, the complexity of this variant is $\bigotilde{\sparsity\volast \sqrt{\L/\alpha}}$, and the complexity of \cref{alg:sparse_conjugate_directions}, which is $\bigo{\sparsity^3 + \sparsity \vol(\suppast)}$, can be upper bounded by $\bigo{\sparsity^2  \volast}$. If the complexity of the variant is better, up to constants and log factors, then we can exchange another $\sparsity$ term by $\sqrt{\L/\alpha}$ to conclude that this complexity is no better than the complexity of the \ista{} approach $\bigotilde{\volast \frac{\L}{\alpha}}$, up to constants and log factors. Nonetheless, one can always compute the full gradient only sporadically to discover new good coordinates earlier, and we expect the empirical performance of \cref{alg:sparse_acceleration} to improve by implementing this modification. In future work, we will extensively test our algorithms with this and other variants to assess their practical performance.

\subsubsection{Updating Constraints}\label{sec:update_constraints}

In \cref{alg:sparse_acceleration}, every time we observe $\nabla_i \g(\xx) \leq 0$ for all $i \in \St[t]$, whether for the iterates of the \apgd{} subroutine or for $\xt[t+1]$, we have by Statement \ref{property:pgd_helper_monotone} of \cref{proposition:pgd_helper} and Statement \ref{property:apgd_approach_x_monotone} of \cref{prop:apgd_approach} that $\xx \leq \xtast[t+1] \leq \xast$. Using this new lower bound on the coordinates of $\xast$, we can update  our constraints. If we initialize the constraints to $\bar{C} \gets \Rp^{\n}$, we can update them to $\bar{C}\gets\bar{C} \cap \{\yy \in \Rp^{\n} \ |\ \yy \geq \xx\}$ every time we find one such point $\xx$. This can help avoiding the momentum of \apgd{} taking us far unnecessarily. We note that these constraints are isomorphic to the positive orthant, and only require storing up to $\sparsity$ numbers.

\let\xtast\oldxtast
\let\xt\oldxt
\let\St\oldSt
\let\Ct\oldCt
\let\Sinit\oldSinit

} 

\section{Conclusion}

We successfully integrated acceleration of optimization techniques into the field of graph clustering, thereby answering the open question raised in \citet{fountoulakis2019variational}. Our results provide evidence of the efficacy of this approach, demonstrating that optimization-based algorithms can be effectively employed to address graph-based learning tasks with great efficiency at scale. This work holds the potential to inspire the development of new algorithms that leverage the power of advanced optimization techniques to tackle other graph-based challenges in a scalable manner.

\clearpage

\acks{
This research was partially funded by the Deutsche Forschungsgemeinschaft (DFG, German Research Foundation) under Germany's Excellence Strategy – The Berlin Mathematics Research Center MATH$^+$ (EXC-2046/1, project ID 390685689, BMS Stipend).
}

\printbibliography[heading=bibintoc]

\clearpage

\appendix

\section{Missing Proofs}
\begin{remark}
We recall that, as we pointed out in \cref{sec:pgd}, \ISTA{} on $\f$ is equivalent to projected gradient descent in $\Rp^{\n}$ on $\g$. \cref{proposition:pgd_helper} allows to quite simply recover the result in \citep{fountoulakis2019variational} about \ISTA{} initialized at $\zeroterm$ having iterates with support in $\suppast$. Moreover, our argument below applies to the optimization of the more general problem where $\Q$ is an arbitrary symmetric positive-definite $M$-matrix.

    Indeed, we satisfy the assumptions of \cref{proposition:pgd_helper} for initial point $\x[0]$ and set of indices $S \gets \{i\ |\ \nabla_ig(\x[0]) < 0\}$, which is non-empty unless $\x[0]$ is the solution $\xast$. Let the corresponding feasible set be $C\defi\mathspan(\{\canonical[i] \ | \ i \in S \}) \cap \Rp^{\n}$. Now, while the iterates of $\pgd{}(\Rp, \x[0], \g, \cdot)$ remain in $C$, that is, for $t$ such that $\x[t] \in C$, they behave exactly like $\pgd{}(C, \x[0], \g, \cdot)$ and so, we have the following invariant by the proof of the \cref{proposition:pgd_helper}: $\x[t] \leq \xtast[C]$ and $\nabla_{S} \g(\x[t]) \leq 0$ and $\x[t] \leq \x[t+1]$. If this algorithm leaves $C$ at step $t$, that is $\x[t] \in C$ and $\x[t+1] \not \in C$, we have $\nabla_{\supp(\x[t+1])} \g(\x[t]) \leq 0$, since the invariant guarantees $\nabla_{S} \g(\x[t]) \leq 0$ and if $i \in \supp(\x[t+1]\setminus S$, it must be $\nabla_i \g(\x[t]) < 0$ by definition of the \pgd{} update rule. In particular, we can apply \cref{proposition:pgd_helper} again with initial point $\x[t]$ and the larger set of indices $\supp(\x[t+1])$, and so on, proving that the invariant $\nabla_{\supp(\x[t])} \g(\x[t]) \leq 0$ and $\x[t] \leq \x[t+1]$ holds for all $t \geq 0$.  By the global convergence of $\pgd{}(\Rp, \x[0], \g, \cdot)$, we have $\x[t] \leq \xast$ for all $t \geq 0$, so it is always $\supp(\x[t])\subseteq \suppast$.
\end{remark}

In the rest of this  section, we present proofs not found in the main text.

{

\begin{proof}\linkofproof{thm:cd_approach}
    We prove the properties in order:
    \begin{enumerate}
        \item [\ref{property:cd_approach_orthogonality}.] For $t = 0$, the statement is trivial. Let $t\in \{0,1,\ldots, \T-1\}$ and assume that $\innp{ \dt[j], \Q \dt[k]} = 0$ for all $j,k\in \{0,1,\ldots, t\}$ such that $j>k$. Then,
        \begin{align*}
            \innp{ \dt[t+1], \Q \dt[k]} & {=}  \innp{ \ut[t+1] {+} \sum_{k=0}^{t} \beta_k^{(t+1)} \dt[k], \Q \dt[k]} = \innp{\ut[t+1], \Q\dt[k] } + \beta_k^{(t+1)} \innp{\dt[k], \Q\dt[k]} = 0,
        \end{align*}
        where the second and third equalities follow from the induction hypothesis and the definition of $\beta^{(t+1)}_k$, respectively.

        \item
        [\ref{property:cd_approach_orthogonality_gradient}.]
        By induction. 
        For $t=0$, there is nothing to prove. For some $t\in \{0,1,\ldots, \T-1\}$ suppose that  for all $k\in \{0,1,\ldots, t\}$, it holds that
    $
            \innp{\nabla \g (\xt[t]), \dt[k] }= 0.
    $
    Then, since  $\xt[t+1]= \xt[t] + \eta^{(t)}\dt[t]$, we have $\nabla \g(\xt[t+1]) = \nabla \g (\xt[t]) + \eta^{(t)} \Q \dt[t]$. Thus, by \cref{property:cd_approach_orthogonality} and the induction hypothesis, we have
        \begin{align*}
            \innp{\nabla \g (\xt[t+1]), \dt[k] } & = \innp{\nabla \g (\xt[t]), \dt[k] } + \eta^{(t)}\innp{ \Q \dt[t], \dt[k] } = 0
        \end{align*}
        for all $k < t$. By the definition of $\eta^{(t)}$, we also have $\innp{\nabla \g (\xt[t+1]), \dt[t] } = 0$. Thus, 
        \begin{align}\label{eq:orthogonality}
            \innp{\nabla \g (\xt[t+1]), \dt[k] }= 0 \qquad \text{for all $t\in \{0,1,\ldots, \T-1\}$ and $k\in \{0,1,\ldots, t\}$.}
        \end{align}
            Thus, since for $t\in\{0, 1,\ldots, \T\}$, $\spann{\{\dt[0], \dt[1], \ldots, \dt[t-1]\}} = \spann{\{\canonical[i] \mid \canonical[i] \in \St[t-1]\}}$, it holds that $\nabla_i \g(\xt[t]) = 0$ for all $i\in \St[t-1]$ by \eqref{eq:orthogonality}.
        
        \item [\ref{property:cd_approach_x_monotone}.]
        By induction. For $t=0$, it holds that $\xx_i^{(0)} > 0$ for all $i\in \Sinit = \emptyset$ and $\zeroterm = \xt[0]=\xtast[0]$.
        Suppose that the statement holds for some $t\in\{0,1,\ldots, \T-1\}$, that is,
        $\xx_i^{(t)} > 0$ for all $i\in \St[t-1]$ and
        $\zeroterm = \xt[0]=\xtast[0]  \leq \xt[1]=\xtast[1]\leq \ldots \leq \xt[t]= \xtast[t]$.
            By \cref{proposition:pgd_helper} applied to $\g$, $\St[t]$, and $\xt[t] = \xtast[t]$, we have that $\xtast[t+1][i] > 0$ for all $i\in \St[t]$ and $\nabla_i \g(\xtast[t+1]) = 0$ for all $i\in \St[t]$, that is, 
        $\xtast[t+1]=\argmin_{\xx \in \spann{\{\canonical[i] \mid i\in \St[t]\}}} \g(\xx)$.  
        By \cref{property:cd_approach_orthogonality_gradient}, $\nabla_ig(\xt[t+1]) = 0$ for all $i\in \St[t]$, that is,
        $\xt[t+1]=\argmin_{\xx \in \spann{\{\canonical[i] \mid i\in \St[t]\}}} \g(\xx)$. By strong convexity of $\g$ restricted to $\spann{\{\canonical[i] \mid i\in \St[t]\}}$, $\xtast[t+1]=\xt[t+1]$.
        \item [\ref{property:cd_approach_optimality}.] By \cref{property:cd_approach_x_monotone}, $ \zeroterm \leq \xt[\T]$, that is, $\xt[\T]$ is a feasible solution to the optimization problem \eqref{eq:opt_equivalent}. By \cref{property:cd_approach_orthogonality_gradient}, $\nabla_i \g(\xt[\T]) = 0$ for all $i\in \St[\T-1]$. Since $\T\in \N$ is the first iteration for which $N^{(\T)} = \left\{i\in[\n] \mid \nabla_i \g(\xt[\T]) < 0\right\} = \emptyset$, $\xt[\T]$ satisfies the optimality conditions \eqref{eq:new_optimality_conditions} and $\xt[\T] = \xast$.
    \end{enumerate}
\end{proof}

\begin{proof}\linkofproof{thm:cd_approach_complexity}
    We run \cref{alg:sparse_conjugate_directions} for $| \suppast |$ iterations. We summarize the costs of operations performed during one iteration $t\in\{0, 1, \ldots, \T\}$. The cost of computing $N^{(t+1)}$ is $\bigo{\vol(\suppast)}$. Note we do not need to store the gradient, at most we would store $\nabla_{\St[t+1]} \g(\xt[t+1])$, and this is not necessary. Note that the vectors $\xt[t+1]$ and $\dt[t]$ and $\dtbar[t]$ are sparse, their support is in $\suppast$. Thus, computing $\bar{\dt[t]}$ takes $\bigo{\intvol(\suppast)}$ and computing $\eta^{(t)}$ and $\xt[t+1]$ takes $\bigo{\sparsity}$. Finally, we discuss the complexity of computing $\beta^{(t)}_k$ for $k < t$.  In order to compute these values efficiently throughout the algorithm's execution, we stored our normalized $\Q$-orthogonal partial basis consisting of the vectors $\dtbar[k]$, for all $k \in \{0, 1, \ldots, \T\}$. Since $\supp( \ut[t] ) = 1$, the cost of computing one $\beta^{(t)}_k$ is only $\bigo{\sparsity}$ and thus computing all them for $k < t$ and computing $\dt[t]$ takes $\bigo{\sparsity^2}$ operations.
    In summary, the time complexity of \cref{alg:sparse_conjugate_directions} is $\bigo{\sparsity^3 +\sparsity\vol(\suppast)}$. The space complexity of \cref{alg:sparse_conjugate_directions} is dominated by the cost of storing  $\dt[k]$ for $k \in \{0, 1, \ldots, t-1\}$, which is $\bigo{\sparsity^2}$.
\end{proof}

} 

\begin{proof}\linkofproof{prop:apgd}
    The proof of the first part is derived from \citet[Theorem 4.10]{diakonikolas2019approximate}. The second part is a straightforward corollary. For any $\T \geq 1+\ceil{2\sqrt{\kappa}\log(\frac{(\L-\alpha)\norm{\x[0]-\xxast}^2}{2\epsilon})}$ we have
\begin{align*}
    f(\y[\T]) - f(\xast) & \leq \left(1 - \frac{1}{2\sqrt{\kappa}}\right)^{\T-1} \frac{(\L-\alpha)\norm{\xt[0]-\xxast}^2}{2} & \text{$\triangleright$ by the first part of \cref{prop:apgd}}\\
    & \leq \exp\left(-\frac{1}{2\sqrt{\kappa}}(\T-1)\right) \frac{(\L-\alpha)\norm{\xt[0]-\xxast}^2}{2} & \text{$\triangleright$ since $(1 + x) \leq e^x$ for all $x\in \R$}\\ 
    & \leq \epsilon
\end{align*}
    In particular, by $\alpha$-strong convexity of $f$, we have $\norm{\xt[0]-\xxast}^2 \leq \frac{\norm{\nabla f(\xt[0])}_2^2}{\alpha^2} $ so we obtain an $\epsilon$-minimizer after $1+\ceil{2\sqrt{\kappa}\log(\frac{(\L-\alpha)\norm{\nabla f(\xt[0])}_2^2}{2\epsilon\alpha^2})}$ iterations.
\end{proof}

\begin{proof}\linkofproof{lemma:A_positive}
    Let $i, j \in[\n]$ such that $i\neq j$. Geometrically, since the gradient of $\g$ is an affine function, the set of points $\yy$ for which $\nabla_j \g(\yy) \geq c$ for some value $c$, forms a halfspace. Fixing $x_i$, any point otherwise coordinatewise smaller than $\xx$ does not increase in gradient, since the off-diagonal entries of $\Q$ are non-positive. That is, the corresponding $(\n-1)$-dimensional halfspace is defined by a packing constraint \citep{allenzhu2019nearly, criado2021fast}. Formally, we have 
\begin{align*}
    \nabla_jg(\yy)-\nabla_jg(\xx) & = (\Q\yy)_j - (\Q\xx)_j = -\epsilon(\Q\canonical[i])_j = - \epsilon \Q_{j,i} \geq 0,
\end{align*}
    where the last inequality uses $\Q_{i,j} = -\frac{(1-\alpha)\A_{i,j}}{2d_i d_j} \leq 0$. The second statement is analogous.
\end{proof}

{ 

\begin{proof}\linkofproof{prop:apgd_approach}
    Because we have $\Sinit=\emptyset$, $\xtast[-1] = \xtast[0] = \xt[0] = \zeroterm$, and by definition it is $\xast \in \Rp^{\n}$, we have that the first two properties hold trivially for $t=0$. \cref{property:apgd_approach_S_monotone} also holds for $t=0$. Indeed, if the set of known good indices does not expand, we have $\nabla \g(\xt[0]) \geq 0$ and so we have $\xt[0] = \zeroterm = \xast$, and thus $\xt[0]$ is an $\epsilon$-minimizer of $\g$ for any $\epsilon > 0$.

    We now prove the three properties inductively. Fix $t \in \{0, 1, \dots, \T-1\}$ and assume \cref{property:apgd_approach_positivity_grad,property:apgd_approach_x_monotone,property:apgd_approach_S_monotone} hold for this choice of $k \in \{0, \dots, t\}$. We will prove they hold for $t+1$.

    The value of the accuracy $\hatepsilon_t$ in \cref{alg:sparse_acceleration} was chosen to compute $\xtbar[t+1]$ close enough to $\xtast[t+1]$. In particular, we have
\begin{align}\label{eq:dist_to_opt_less_than_delta}
 \begin{aligned}
     \norm{\xtbar[t+1]-\xtast[t+1]}^2 \circled{1}[\leq] \frac{2}{\alpha} (\g(\xt[t+1]) - \g(\xtast[t+1])) \circled{2}[\leq] \frac{2\hatepsilon_t}{\alpha} \circled{3}[=] \delta_t^2,
\end{aligned}
\end{align}
    where we used $\alpha$-strong convexity of $\g$ for $\circled{1}$, the convergence guarantee of \apgd{} on $\xtbar[t+1]$ for $\circled{2}$, and for $\circled{3}$ we used the definition of $\hatepsilon_t$. The above allows to show that $\xt[t+1] \defi \max\{\zeroterm, \xtbar[t+1]-\delta_t \oneterm\}$, where the $\max$ is taken coordinatewise, satisfies 
\begin{equation}\label{eq:xt_leq_xtast}
    \xt[t+1] \leq \xtast[t+1].
\end{equation}
Suppose this property does not hold and that for some $i$ we have $\xt[t+1][i] > \xtast[t+1][i] \geq 0$. Then, we would have that $\xt[t+1][i] =\xtbar[t+1][i] - \delta_t$ and
    \[
        \norm{\xtbar[t+1]-\xtast[t+1]} \geq \abs{\xtbar[t+1][i]-\xtast[t+1][i]} \geq \xtbar[t+1][i]-\xtast[t+1][i] = \xt[t+1][i] + \delta_t -\xtast[t+1][i] > \delta_t,
    \] 
    which is a contradiction. Note that we have
    \begin{equation}\label{eq:neg_grad_at_xtast}
        \nabla_j \g(\xtast[t+1]) \circled{1}[\leq] \nabla_j \g(\xt[t+1]) \circled{2}[<] 0 \qquad \text{for all} \ j \in \St[t+1]\setminus \St[t],
    \end{equation}
    since $\circled{2}$ holds by definition of $\St[t+1]$ and $\circled{1}$ is due to \cref{lemma:A_positive} and the fact that we can write $\xt[t+1] = \xtast[t+1] - \sum_{i\in \St[t]}\omega_i \canonical[i]$ for some $\omega_i \in\Rp$, since we just proved $\xt[t+1] \leq \xtast[t+1]$ in \eqref{eq:xt_leq_xtast}, and by construction $\supp(\xt[t+1])\subseteq \St[t]$ and $\supp(\xtast[t+1]) \subseteq \St[t]$.

We now show that
    \begin{equation}\label{eq:xtast_leq_xtPlus1ast} 
        \xtast[t] \leq \xtast[t+1].
    \end{equation}
    This fact holds by Item \ref{property:pgd_helper_monotone} of \cref{proposition:pgd_helper} with starting point $\xtast[t]$ and $S \gets \St[t]$, which makes it $\xx^{(\ast, C)} \gets \xtast[t+1]$. The assumptions of \cref{proposition:pgd_helper} hold  since $\xtast[t] = 0$ if $i\in[\n]\setminus\St[t] \subseteq [\n]\setminus\St[t-1]$ by construction and we have $\nabla_i \g(\xtast[t]) = 0$ for $i\in\St[t-1]$ by induction hypothesis of \cref{property:apgd_approach_positivity_grad} and  $\nabla_i \g(\xtast[t]) < 0$ for $i\in\St[t] \setminus \St[t-1]$ by the same argument we provided to show \eqref{eq:neg_grad_at_xtast}. In the same context, we also use Item \ref{property:pgd_helper_positivity} of \cref{proposition:pgd_helper}, using the fact that $\nabla_i \g(\xtast[t]) < 0$ for $i\in\St[t] \setminus \St[t-1]$ and that by \cref{property:apgd_approach_positivity_grad} for $t$, we have $\xtast[t][i] > 0$ for all $i \in \St[t-1]$. Therefore, we conclude $\xtast[t+1][i] > 0$ for all $i\in \St[t]$, which means we proved \cref{property:apgd_approach_positivity_grad} for $t+1$.

    Moreover, now using Item \ref{property:pgd_helper_subset} of \cref{proposition:pgd_helper} in this context, we conclude
    \begin{equation}\label{eq:xtast_leq_xast}
      \xtast[t+1] \leq \xast.
    \end{equation}
    Thus, \cref{property:apgd_approach_x_monotone} holds for $t+1$ since we proved \eqref{eq:xt_leq_xtast}, \eqref{eq:xtast_leq_xtPlus1ast} and \eqref{eq:xtast_leq_xast}.

    Now we prove \cref{property:apgd_approach_S_monotone} for $t+1$. We note that the value of $\delta_t$ was chosen so that the retracted point $\xt[t+1]$ still enjoys a small enough gap:
    \begin{align} \label{eq:retracted_point_still_has_small_gap}
 \begin{aligned}
     \g(\xt[t+1] ) - \g(\xtast[t+1])  &\circled{1}[\leq] \frac{\L}{2} \norm{\xt[t+1] - \xtast[t+1]}_2^2   \\
        &\circled{2}[\leq] \L (\norm{\xtbar[t+1] - \xtast[t+1]}_2^2 + \card{\St[t]} \delta_t^2)  \\
        & \circled{3}[\leq] \L (1 + \card{\St[t]}) \delta_t^2  \\
        &\circled{4}[\leq] \frac{\epsilon\alpha}{\L}. 
   \end{aligned}
    \end{align}
    Above, $\circled{1}$ uses the optimality of $\xtast[t+1]$ and $\L$-smoothness, while $\circled{2}$ holds because by construction of $\xt[t+1]$, we have $\sum_{i\in \St[t]} \abs{\xt[t+1][i]-\xast[i]}^2 \leq \sum_{i\in \St[t]} (\abs{\xtbar[t+1][i]-\xast[i]} +\delta_t)^2 \leq 2 \sum_{i\in \St[t]} (\xtbar[t+1][i]-\xast[i])^2+2\card{ \St[t]} \delta_t^2$. We have $\circled{3}$ by \eqref{eq:dist_to_opt_less_than_delta} and $\circled{4}$ holds by the definition of $\delta_t$, which was made to satisfy this inequality.

    We now show that if $\xt[t+1]$ is not an $\epsilon$-minimizer of $\g$ in $\Rp^{\n}$, then one step of \pgd{} makes more progress than what can be made in $\Ct[t+1]$, by \eqref{eq:retracted_point_still_has_small_gap}, and so \pgd{} explores a new coordinate, that is, we have a coordinate $i$ with $\nabla_i \g(\xt[t+1]) < 0$ and we can extend the set of good coordinates $\St[t]$. Indeed, suppose that $\g(\xt[t+1]) - \g(\xast) > \epsilon$.  
    Let $\yy^{(t+1)}= \proj{\R^n_{\geq 0}}\left(\xt[t+1] - \nabla \g(\xt[t+1])\right)$.
    We use the following property in \citep[Equation below (23)]{fountoulakis2019variational} from the guarantees of \ista{} on the problem, or equivalently on $\pgd{}\left(\Ct[t+1], \xt[t], \g, 1\right)$, see \cref{sec:pgd}. We have
    \begin{align}\label{eq:projected_gd_guarantee_in_lemma}
        \g(\yy^{(t+1)}) - \g(\xast) \leq \left(1 - \frac{\alpha}{\L}\right) (\g(\xt[t+1]) - \g(\xast)).
    \end{align}
    Consequently, we obtain
    \begin{align*}
        \g(\xt[t+1]) - \g(\xtast[t+1]) &\circled{1}[\leq] \frac{\epsilon\alpha}{\L} \circled{2}[<] \frac{\alpha}{\L} (\g(\xt[t+1]) - \g(\xast)) \circled{3}[\leq] \g(\xt[t+1]) - \g(\yy^{(t+1)}),
    \end{align*}
    where $\circled{1}$ holds by \eqref{eq:retracted_point_still_has_small_gap}, $\circled{2}$ holds by our earlier assumption $\g(\xt[t+1])-\g(\xast) > \epsilon$, and $\circled{3}$ is obtained by \eqref{eq:projected_gd_guarantee_in_lemma} after adding $\g(\xt[t+1]) - \g(\yy^{(t+1)})$ to both sides, and reorganizing. Hence,
        $
        \g(\xtast[t+1]) > \g(\yy^{(t+1)}).
    $
    Since $\xtast[t+1]$ is the minimizer of $\g$ in $\Ct[t]$, it holds that $\yy^{(t+1)} \not \in \Ct[t]$ and so $\nabla_ig(\xt[t+1]) < 0 $ for at least one $i \not\in \St[t]$, and $\St[t] \subsetneq \St[t+1]$. 

    It remains to prove that $\St[t+1] \subseteq \suppast$. For $t+1 = \T$ it is $\St[\T-1] = \St[\T]$ and the property holds by induction hypothesis. For the case $t+1 \neq \T$, suppose the property does not hold and so there exists $j \not\in\St[t]$ such that $j\not\in\suppast$ and $\nabla_j \g(\xt[t+1]) < 0$. In that case, we have by \eqref{eq:neg_grad_at_xtast} that $\nabla_j \g(\xtast[t+1]) < 0$. On the other hand, it is $\nabla_i \g(\xtast[t+1]) = 0$ and $\xtast[t+1][i] > 0$ for $i \in \St[t]$ by \cref{property:apgd_approach_positivity_grad} and so we can apply \cref{proposition:pgd_helper} with $S\gets \St[t] \cup\{j\}$ and initial point $\xtast[t+1]$ to conclude a contradiction, since by Item \ref{property:pgd_helper_positivity} it is $x^{(\ast, C)}_j > 0$ but by Item \ref{property:pgd_helper_subset} we have  $x^{(\ast, C)}_j \leq \xast[j] =0$.

    Finally, by \cref{property:apgd_approach_S_monotone} for $t=\T$, since $\St[\T-1]$ does not expand, that is, $\St[\T-1] = \St[\T]$, it must be $\g(\xt[\T]) - \g(\xast) \leq \epsilon$.

\end{proof}

\begin{proof}\linkofproof{thm:apgd_approach_complexity}
    For each iteration, the time complexity of \cref{alg:sparse_acceleration} is the cost of the \apgd{} subroutine plus the full gradient computation in Line \ref{line:expanding_S_t}. By \cref{prop:apgd_approach}, \apgd{} is called at most $\T=\sparsity$ times and it runs for $\cO\left(\sqrt{\frac{\L}{\alpha}} \log\left(\frac{(\L-\alpha)\|\nabla_{\St[t]}\g(\xt[t])\|_2^2}{\hatepsilon_t\alpha^2}\right) \right)$ iterations at each stage $t$. One iteration of \apgd{} involves the computation of the gradient restricted to the current subspace of good coordinates, and involves the update of the iterates, costing $\bigo{\intvol(\suppast)}$. The computation of the full gradient takes $\bigo{\vol(\suppast)}$ operations.
    Thus, the total running time of \cref{alg:sparse_acceleration} is
    \begin{align*}
        &\bigol{\sparsity  \intvol(\suppast) \sqrt{\frac{\L}{\alpha}}\log\left(\frac{(\L-\alpha)\max_{t\in \{0, 1, \ldots, \T-1\}}\|\nabla_{\St[t]}\g(\xt[t])\|_2^2}{\hatepsilon_t\alpha^2} \right) + \sparsity \vol(\suppast)} \\
        & \circled{1}[=] \bigol{\sparsity  \intvol(\suppast) \sqrt{\frac{\L}{\alpha}}\log\left(\frac{\L^2(\L-\alpha)\norm{\zeroterm-\xast}^2}{\hatepsilon_t\alpha^2}\right) + \sparsity \vol(\suppast)}\\
        & = \bigotildel{\sparsity  \intvol(\suppast) \sqrt{\frac{\L}{\alpha}} + \sparsity \vol(\suppast)},
\end{align*}
    where $\circled{1}$ holds since by $\L$-smoothness of $\g$ restricted to $\spann{\{\canonical[i]\mid i\in\St[t]\}}$ and by $\zeroterm \leq \xt[t]\leq \xtast[t]\leq \xast$ for all $t\in [\T]$, we have 
$\norm{\nabla_{\St[t]}\g(\xt[t])}_2^2 \leq \L\norm{\xt[t] -\xtast[t]}_2^2 \leq \L\norm{\zeroterm -\xast}_2^2$.
    To further interpret the bound in the $\ell_1$-regularized PageRank problem, we can further bound 
\begin{align*}
     \norm{\zeroterm -\xast}_2^2 & \leq \frac{1}{\alpha^2} \norm{\nabla_{\suppast} \g (\zeroterm ) - \nabla_{\suppast} \g (\xast )}_2^2& \text{$\triangleright$ by $\alpha$-str. convexity of $\g$ in $\spann{\{\canonical[i]\mid i\in\suppast\}}$}\\
    & \leq \frac{1}{\alpha^2} \norm{\nabla_{\suppast} \g (\zeroterm )}_2^2 & \text{$\triangleright$ by optimality of $\xast$}\\
    & \leq \frac{1}{\alpha^2}\norm{(-\alpha \D^{-1/2}\ss + \alpha\rho \D^{1/2}\oneterm)_{\suppast}}_2^2 & \text{$\triangleright$ by the gradient definition}\\
    &\leq \frac{1}{\alpha^2}\norm{(-\D^{-1/2}\oneterm+\D^{1/2}\oneterm)_{\suppast}}_2^2 & \text{$\triangleright$ $\alpha, \ss[i], \rho \leq 1$}\\
    &\leq \frac{1}{\alpha^2}(1 + \sqrt{\vol(\suppast)})^2 \sparsity& \text{$\triangleright$ maximum $d_i$ for $i\in\suppast$ is $\leq \card{\vol(\suppast)}$}\\
    &= \bigol{\frac{1}{\alpha^2}\abs{\suppast}\vol(\suppast)}.
\end{align*}
    Then, by the definition of $\hatepsilon_t$, the time complexity of \cref{alg:sparse_acceleration} of the $\ell_1$-regularized PageRank problem is
    \begin{align*}
        &\bigol{| \suppast |\intvol(\suppast)\sqrt{\frac{\L}{\alpha}}\log\left(\frac{2L^4 (1+\card{\St[t]})(\L-\alpha) | \suppast |  \vol(\suppast)}{\alpha^6\epsilon}\right) + \sparsity\vol(\suppast)}.
\end{align*}

    The space complexity of \cref{alg:sparse_acceleration} is dominated by the cost of storing  the gradient $\nabla_{\St[t]} \g(\xt[t])$, which is $\bigo{\sparsity}$, since $\St[t] \subseteq \suppast$. Note that we require to compute the full gradient when updating $\St[t+1]$, but we only store the new indices.
\end{proof}

}

\begin{proof}\linkofproof{lemma:modification}
    Fix $j \not\in S$ such that $\nabla_j \g(\xx) < 0$. By the assumption $x_i = 0$ if $i\not\in S$ and $\nabla_i \g(\xx) \leq 0$ if $i\in S$, and therefore we can use \cref{proposition:pgd_helper} with $S$ and $\xx$ to conclude $\zeroterm \leq \xx \leq \xtast[C]$ and $\nabla_j \g(\xtast[C]) = 0$. We can thus write $\xx = \xtast[C] - \sum_{i\in S}\omega_i \canonical[i]$, for $\omega_i \in\Rp$ for all $i\in S$. By \cref{lemma:A_positive}, it holds that $\nabla_j \g(\xtast[C]) \leq \nabla_j \g(\xx) < 0$. We now use \cref{property:pgd_helper_positivity,property:pgd_helper_subset} in \cref{proposition:pgd_helper} with set of indices $\bar{S}\defi S \cup \{j\}$ and starting point $\xtast[C]$. By \cref{property:pgd_helper_positivity} we have for $\xtast[\bar{C}]=\argmin_{\xx\in\bar{C}}\g(\xx)$ that $\xtast[\bar{C}][j] > 0$, where $\bar{C} \defi \spann{\{\canonical[i] \mid i \in \bar{S}\}}$. By \cref{property:pgd_helper_monotone}, we have $\xtast[\bar{C}][i] \geq \xtast[C][i] > 0$ for $i \in S$. Thus, $\xtast[\bar{C}][i]> 0$ for all $i\in\bar{S}$ and by \cref{property:pgd_helper_subset}, it holds $\bar{S} \subseteq \suppast$ and in particular $j\in\suppast$.
\end{proof}

\section{Algorithmic Comparisons}\label{sec:algorithmic_comparisons}
\cdappr{} has worse space complexity, $\cO(\sparsity^2)$, than \aspr{} and \ista{}, both $\cO(\sparsity)$. However, since \cdappr{} finds the exact solution, \cdappr{} outperforms the other methods in running time for small enough $\epsilon$. Note that the time complexities of \ista{} and \aspr{} depend on $\frac{1}{\epsilon}$ only logarithmicly. We perform the remaining comparison for $\log(1/\epsilon)$ treated as a constant, since it is so in practice.
If 
\begin{align*}
     \frac{\L}{\alpha} > \max\left\{\frac{\sparsity^3}{\volast}, \sparsity \right\},
\end{align*}
then \cdappr{} performs better than \ista{}, up to constants. Since $\volast\geq \sparsity$, this is, for example, satisfied when $\frac{\L}{\alpha} > \sparsity^2$. 
If
\begin{align*}
    \frac{\L}{\alpha} > \max\left\{\left(\frac{\sparsity\intvol(\suppast)}{\volast}\right)^2, \sparsity \right\},
\end{align*}
then \aspr{} performs better than \ista{}, up to constants and log factors. This is, for example, satisfied when $\frac{\L}{\alpha} > \sparsity^2$ or when $\frac{\L}{\alpha} > \sparsity$ and $\volast > \sparsity^{5/2}$ since $\intvol(\suppast) \leq \sparsity^2$.
If the convergence rates of \cdappr{} and \aspr{} are dominated by $\cO(\sparsity \volast)$, then the algorithms perform similarly. However, if the time complexities of \cdappr{} and \aspr{} are of orders $\cO(\sparsity^3)$ and $\cO(\sparsity \volast \sqrt{\frac{\L}{\alpha}})$, respectively, then \cdappr{} performs better than \aspr{} for
\begin{align*}
    \frac{\L}{\alpha} > \left(\frac{\sparsity^2}{\intvol(\suppast)}\right)^2,
\end{align*}
up to constants and log factors. We note that although \citet{fountoulakis2019variational} describe their method as using $\bigo{\vol(\suppast)}$ memory, their \ista{} solver actually only requires $\bigo{\sparsity}$ space, as it is enough to store the entries of the iterates and gradients corresponding to the good coordinates, whereas the gradient entries for bad coordinates can be discarded immediately after computation.

\end{document}